%% LyX 2.3.6.1 created this file.  For more info, see http://www.lyx.org/.
%% Do not edit unless you really know what you are doing.
\documentclass[oneside,english]{siamart220329}

\usepackage[T1]{fontenc}
\usepackage[latin9]{inputenc}
\usepackage{xcolor}
\usepackage{array}
\usepackage{verbatim}
\usepackage{prettyref}
\usepackage{calc}
\usepackage{enumitem}
\usepackage{amsbsy}
\usepackage{amstext}
\usepackage{amssymb}
\usepackage{graphicx}

\makeatletter

%%%%%%%%%%%%%%%%%%%%%%%%%%%%%% LyX specific LaTeX commands.
%% Because html converters don't know tabularnewline
\providecommand{\tabularnewline}{\\}

%%%%%%%%%%%%%%%%%%%%%%%%%%%%%% Textclass specific LaTeX commands.
\numberwithin{equation}{section}
\numberwithin{figure}{section}
      % auxiliary length 
\theoremstyle{plain}

%%%%%%%%%%%%%%%%%%%%%%%%%%%%%% User specified LaTeX commands.
% Other
\usepackage{lmodern}
\usepackage{xcolor}
\usepackage[T1]{fontenc}
\usepackage{svg}
\definecolor{green}{rgb}{0,0.5,0.0} % Darken the default green
\hypersetup{
    colorlinks,
    citecolor=green,
    filecolor=green,
    linkcolor=green,
    urlcolor=green
}

% Text
\DeclareMathSizes{8}{8}{6}{5} 
\DeclareMathSizes{7}{7}{5}{4} 
\DeclareMathSizes{6}{6}{4}{3} 
\DeclareMathSizes{5}{5}{3}{2} 

% Spacing
\setlength{\textfloatsep}{0.5em}

% Cross-referencing + Citations
\usepackage{cite}
\let\prettyref\Cref
\Crefname{algocf}{Algorithm}{Algorithms}
\Crefname{ALC@unique}{Line}{Lines}
\crefname{line}{line}{lines}

% Algorithm
\usepackage[noend]{algpseudocode}
\usepackage{dashrule}

\makeatletter
\def\ALG@special@indent{%
    \ifdim\ALG@thistlm=0pt\relax
        \hskip-\leftmargin
    \else
        \hskip\ALG@thistlm
    \fi
}
\newcommand{\StateO}[1]{\item[]\noindent\ALG@special@indent #1}
\makeatother

% Table
\renewcommand{\arraystretch}{1.25}

\makeatother

\usepackage{babel}

\providecommand{\theoremname}{Theorem}

\begin{document}
\title{Complexity-optimal and Parameter-Free First-Order Methods for Finding
Stationary Points of Composite Optimization Problems\thanks{\textbf{Funding}: The work of this author has been supported by the
Exascale Computing Project (17-SC-20-SC), a collaborative effort of
the U.S. Department of Energy Office of Science and the National Nuclear
Security Administration.\protect \\
\textbf{Versions}: v1.0 (May 25, 2022), v2.0 (February 11, 2023),
v3.0 (September 17, 2023), v4.0 (February 9, 2024)}}
\author{Weiwei Kong\thanks{Work done at the Computer Science and Mathematics Division, Oak Ridge
National Laboratory, Oak Ridge, TN, 37830. \texttt{Email: wwkong92@gmail.com}}}

\maketitle
\global\long\def\tx{\tilde{x}}%
\global\long\def\rn{\mathbb{R}^{n}}%
\global\long\def\R{\mathbb{R}}%
\global\long\def\r{\mathbb{R}}%
\global\long\def\n{\mathbb{N}}%
\global\long\def\c{\mathbb{C}}%
\global\long\def\pt{\mathbb{\partial}}%
\global\long\def\lam{\lambda}%
\global\long\def\argmin{\operatorname*{argmin}}%
\global\long\def\Argmin{\operatorname*{Argmin}}%
\global\long\def\argmax{\operatorname*{argmax}}%
\global\long\def\dom{\operatorname*{dom}}%
\global\long\def\ri{\operatorname*{ri}}%
\global\long\def\inner#1#2{\langle#1,#2\rangle}%
\global\long\def\trc{\operatorname*{tr}}%
\global\long\def\cConv{\overline{{\rm Conv}}\ }%
\global\long\def\intr{\operatorname*{int}}%

\begin{abstract}
This paper develops and analyzes an accelerated proximal descent method
for finding stationary points of nonconvex composite optimization
problems. The objective function is of the form $f+h$ where $h$
is a proper closed convex function, $f$ is a differentiable function
on the domain of $h$, and $\nabla f$ is Lipschitz continuous on
the domain of $h$. The main advantage of this method is that it is
``parameter-free'' in the sense that it does not require knowledge
of the Lipschitz constant of $\nabla f$ or of any global topological
properties of $f$. It is shown that the proposed method can obtain
an $\varepsilon$-approximate stationary point with iteration complexity
bounds that are optimal, up to logarithmic terms over $\varepsilon$,
in both the convex and nonconvex settings. Some discussion is also
given about how the proposed method can be leveraged in other existing
optimization frameworks, such as min-max smoothing and penalty frameworks
for constrained programming, to create more specialized parameter-free
methods. Finally, numerical experiments are presented to support the
practical viability of the method.
\end{abstract}

\begin{keywords}
nonconvex composite optimization, first-order accelerated gradient
method, iteration complexity, inexact proximal point method, parameter-free,
adaptive, optimal complexity
\end{keywords}

\begin{AMS}
47J22, 65K10, 90C25, 90C26, 90C30, 90C60
\end{AMS}

\section{Introduction}

\label{sec:intro}

Consider the nonsmooth composite optimization problem
\begin{equation}
\phi_{*}=\min_{z\in\rn}\left\{ \phi(z):=f(z)+h(z)\right\} \label{eq:main_prb}
\end{equation}
where $h:\rn\mapsto(\infty,\infty]$ is a proper closed convex function,
$f$ is a (possibly nonconvex) continuously differentiable function
on an open set containing the domain of $h$ (denoted as $\dom h$),
and $\nabla f$ is Lipschitz continuous. It is well known that the
above assumption on $f$ implies the existence of positive scalars
$m$ and $M$ such that 
\begin{equation}
-\frac{m}{2}\|x-x'\|^{2}\leq f(x)-f(x')-\left\langle \nabla f(x'),x-x'\right\rangle \leq\frac{M}{2}\|x-x'\|^{2}\label{eq:intro_curv}
\end{equation}
for every $x,x'\in\dom h$. The quantity $(m,M)$ is often called
a \emph{curvature pair }of $\phi$ (see, for example, \cite{liang2018doubly,liang2021fista}),
and the first inequality of \eqref{eq:intro_curv} is often called
\emph{weak-convexity} when $m>0$ (see, for example, \cite{drusvyatskiy2017proximal,davis2019stochastic}).

Recently, there has been a surge of interest in developing efficient
algorithms for finding $\varepsilon$-stationary points of \eqref{eq:main_prb},
which consist of a pair $(\bar{z},\bar{v})\in\dom h\times\rn$ satisfying
\textbf{
\begin{equation}
\bar{v}\in\nabla f(\bar{z})+\partial h(\bar{z}),\quad\|\bar{v}\|\leq\varepsilon.\label{eq:stationary_def}
\end{equation}
}While complexity-optimal algorithms exist for the case where both
$m$ and $M$ are known, a \emph{parameter-free} algorithm --- one
without knowledge of $(m,M)$ --- with optimal iteration complexity
remains elusive. 

Our goal in this paper is to develop, analyze, and extend a parameter-free
\emph{accelerated} \emph{proximal} \emph{descent} (PF.APD) algorithm
that obtains, up-to-logarithmic terms, optimal iteration complexities
regardless of the convexity of $f$. Roughly speaking, PF.APD generates
a sequence of iterates $\{(z_{k},m_{k})\}\subseteq\dom h\times\r_{++}$
which satisfies
\begin{equation}
z_{k+1}\approx\argmin_{z\in\dom h}\left\{ \frac{\phi(z)}{2m_{k+1}}+\frac{1}{2}\|z-z_{k}\|^{2}\right\} ,\quad\phi(z_{k+1})\leq\phi(z_{k}).\label{eq:approx_scheme}
\end{equation}
for every $k\geq0$. Notice that the first expression in \eqref{eq:approx_scheme}
is an inexact \emph{proximal} point update with stepsize $1/(2m_{k+1})$,
while the inequality in \eqref{eq:approx_scheme} implies $\{\phi(z_{k})\}$
is a \emph{descent} sequence. More precisely, the $(k+1)$-th iteration
of PF.APD is as follows:
\begin{center}
\vspace*{1em}%
{\fboxrule 0.5pt\fboxsep 6pt\fbox{\begin{minipage}[t]{0.85\columnwidth}%
\textbf{Iteration $k+1$:}
\begin{itemize}
\item[(i)] Given $\hat{m}\in\r_{++}$, find a \emph{proximal descent }point
$z_{k+1}\in\dom h$ in which there exists $\hat{u}\in\rn$ satisfying
\begin{align}
 & \hat{u}\in\nabla f(z_{k+1})+\pt\left(h+\hat{m}\|\cdot-z_{k}\|^{2}\right)(z_{k+1}),\label{eq:intro_incl}\\
 & \|\hat{u}+\hat{m}(z_{k}-z_{k+1})\|^{2}\leq2\theta\hat{m}\left[\phi(z_{k})-\phi(z_{k+1})\right],\label{eq:intro_invar1}\\
 & \|\hat{u}\|^{2}\leq2(\rho\hat{m})^{2}\|z_{k+1}-z_{k}\|^{2},\label{eq:intro_invar2}
\end{align}
for some $\theta>0$ and $\rho>0$.
\item[(ii)] If a key inequality fails during the execution of step~(i), change
$\hat{m}$ and try step (i) again. Else, set $m_{k+1}=\hat{m}$.
\end{itemize}
\end{minipage}}}\vspace*{1em}
\par\end{center}

To find $z_{k+1}$ in step (i) in the above outline, PF.APD specifically
applies a parameter-free \emph{accelerated} composite gradient (PF.ACG)
algorithm to the subproblem $\min_{z\in\dom h}\{\phi(z)/(2\hat{m})+\|z-z_{k}\|^{2}/2\}$
until a finite set of key descent inequalities holds. During the execution
of PF.ACG, several inequalities are also checked to ensure its convergence
(specifically the ones in \eqref{eq:acg_cvx_cond}), and execution
is halted if at least one of these inequalities does not hold. These
inequalities are always guaranteed to hold when $\hat{m}\geq m$ but
may fail to hold when $\hat{m}<m$.

It is worth mentioning that the main difficulties preventing the extension
of existing complexity-optimal methods to parameter-free ones is their
dependence on \emph{global} topological conditions that strongly depend
on the knowledge of $(m,M)$, e.g., \eqref{eq:intro_curv}, convexity
of $f$, or knowledge of the Lipschitz modulus of $\nabla f$. Hence,
one of the novelties of PF.APD is its ability to relax these conditions
to a finite set of \emph{local} topological conditions that only depend
on the generated sequence of iterates.\smallskip{}

\subsection{Literature Review}

To keep our notation concise, we will make use of\vspace{-1em}

\begin{equation}
\Delta_{0}:=\phi(z_{0})-\inf_{z\in\rn}\phi(z),\quad d_{0}:=\inf_{z_{*}\in\rn}\left\{ \|z_{0}-z_{*}\|:\phi(z_{*})=\inf_{z\in\rn}\phi(z)\right\} ,\label{eq:optimality_residuals}
\end{equation}
with the assumption that $\Delta_{0}<\infty$ but $d_{0}$ may be
infinite. Furthermore, we break our discussion between the convex
and nonconvex settings and between two types of methods:
\begin{enumerate}[left=0.5em]
\item[\emph{I}.] Algorithms that find $\hat{z}\in\dom h$ satisfying $\phi(\hat{z})-\inf_{z\in\rn}\phi(z)\leq\varepsilon$;
\item[\emph{II}.] Algorithms that find $\bar{z}\in\dom h$ satisfying ${\rm dist}(0,\nabla f(\bar{z})+\pt h(\bar{z}))\leq\varepsilon$.
\end{enumerate}
It is worth mentioning that complexity-optimal \emph{type-I} methods
are not necessarily complexity-optimal \emph{type-II} methods, as
noted in \cite{nesterov2012make}.

\subsubsection*{Convex Setting}

For this discussion, we assume $\phi$ to be convex. Paper \cite{nesterov1983}
presents the first complexity-optimal \emph{type-I }methods, under
the assumption that $\max\{m,M\}$ is known. Papers \cite{guminov2019accelerated,guminov2021combination,nesterov2020primal,nesterov2013gradient}
(resp. paper \cite{neumaier2016osga}) present parameter-free complexity-optimal
\emph{type-I} methods for the case of $h\equiv0$ (resp. $h$ being
the indicator of a closed convex set). Paper \cite{ahookhosh2018solving}
extends the method in \cite{neumaier2016osga} to another parameter-free
complexity-optimal \emph{type-I} method for general convex functions
$h$.

The regularized accelerated method described in \cite{nesterov2012make}
is one of the earliest nearly-optimal (up to logarithmic terms) \emph{type-II}
methods for the case of $h\equiv0$. However, its complexity is obtained
under the strong assumption that: (i) $\max\{m,M\}$ is known, (ii)
that there exists $z_{*}\in\dom h$ such that $\phi(z_{*})=\inf_{z\in\rn}\phi(z)$,
(iii) and that a lower bound for $d_{0}$ is known. Whether a parameter-free
complexity-optimal \emph{type-II} method exists in the convex setting
is still unknown.

\subsubsection*{Nonconvex Setting}

For this discussion, we assume $\phi$ to be nonconvex. One of the
most well-known parameter-free \emph{type-II} algorithms is the proximal
gradient descent (PGD) method with backtracking line search. In \cite{nesterov2013gradient},
it was shown that this method has a ${\cal O}(\varepsilon^{-2})$
\emph{type-II} complexity bound when $f$ is weakly-convex and a suboptimal
${\cal O}(\varepsilon^{-1})$ \emph{type-II} bound when $f$ is convex. 

One of the earliest accelerated \emph{type-II} methods is found in
\cite{ghadimi2016accelerated} under the assumption that $\dom h$
is bounded. Following this, paper \cite{ghadimi2019generalized} presented
a parameter-free extension of the method in \cite{ghadimi2016accelerated}
that handles Hölder continuous gradients of $f$. In a separate line
of research, \cite{liang2021fista} presented a \emph{type-II} accelerated
method whose main steps are variants of the (accelerated) FISTA algorithm
in \cite{beck2009fast} and assumes $\dom h$ is bounded. A variant
of this method, with improved iteration complexity bounds in the convex
setting, was examined in \cite{sim2020fista}. It is worth noting
that some of the methods in \cite{ghadimi2016accelerated,ghadimi2019generalized,liang2021fista,sim2020fista}
have optimal \emph{type-I} bounds when $f$ is convex but all the
methods have suboptimal \emph{type-II} bounds even when $f$ is convex.

Motivated by the developments in \cite{ghadimi2016accelerated}, other
papers, e.g., \cite{carmon2018accelerated,drusvyatskiy2019efficiency,paquette2017catalyst,li2015accelerated},
developed similar accelerated methods under different assumptions
on $f$ and $h$. Recently, \cite{kong2019complexity} proposed a
parameter-dependent accelerated inexact proximal point (AIPP) method
that has an optimal iteration complexity bound of ${\cal O}(\sqrt{Mm}\Delta_{0}/\varepsilon^{2})$
when $f$ is weakly convex but has no advantage when $f$ is convex.
The work in \cite{kong2020efficient} proposed an adaptive version
of AIPP where $(m,M)$ were estimated locally, but a lower bound for
$\max\{m,M\}$ was still required. A version of \cite{kong2019complexity}
in which the outer proximal point scheme is replaced with an accelerated
one was examined in \cite{liang2018doubly}, in which a moderately
worse iteration-complexity bound was established. 

\subsubsection*{Tangentially Related Works}

The developments in \cite{kong2021fista,kong2021accelerated,kong2019complexity}
strongly influenced and motivated the technical developments of both
PF.ACG and PF.APD. Since PF.APD shares strong similarities with AIPP
in \cite{kong2019complexity}, we mention one of the former's technical
improvements on the latter. To begin, note that AIPP is a double-loop
method that repeatedly calls an ACG-type method on a sequence of prox
subproblems to generate a sequence of \emph{outer} iterates $\{(z_{k},v_{k},\varepsilon_{k})\}$
(at the end of each ACG call) satisfying 
\begin{equation}
v_{k}\in\partial_{\varepsilon_{k}}\left(\frac{\phi}{2m}+\frac{1}{2}\|\cdot-z_{k-1}\|^{2}\right)(z_{k}),\quad\|v_{k}\|^{2}+2\varepsilon_{k}\leq\sigma^{2}\|v_{k}+z_{k-1}-z_{k}\|^{2},\label{eq:hpe}
\end{equation}
where $\sigma\in(0,1)$ and $\pt_{\varepsilon}\psi(x):=\{u\in\r^{n}:\psi(z')\geq\psi(z)+\inner u{z'-z}-\varepsilon,\quad\forall z'\in\r^{n}\}$.
An expensive refinement procedure, whose effectiveness strongly depends
on \eqref{eq:hpe} and knowledge of $\max\{m,M\}$, is then applied
to each $(z_{k},v_{k},\varepsilon_{k})$ to obtain $(\bar{z},\bar{v})$
satisfying the inclusion in \eqref{eq:stationary_def}. In contrast,
the iterates generated at every \emph{inner} iteration of PF.APD always
satisfy the inclusion in \eqref{eq:stationary_def}, for a different
choice of $\bar{v}$ (see \prettyref{lem:acg_specialization}), and,
consequently, the termination of PF.APD can be checked at \emph{every}
one of its inner iterations \emph{without} the need for an expensive
refinement procedure. It is worth mentioning those relative prox-stationarity
criteria, such as \eqref{eq:intro_invar2} and \eqref{eq:hpe}, were
previously analyzed in \cite{rockafellar1976augmented} and, more
recently, in \cite{monteiro2010complexity,alves2016regularized,marques2019iteration,monteiro2010convergence,monteiro2015hybrid,monteiro2016adaptive}.

We now make a brief comparison between PF.APD and two adaptive proximal
methods in the literature. First, compared to the redistributed prox-bundle
(RPB) method in \cite{hare2010redistributed}, both PF.APD and RPB
are double-loop methods consisting of (i) outer (or ``serious'')
iterations that consider prox-subproblems of the form in \eqref{eq:approx_scheme}
and some $\lam>0$ and (ii) inner (or ``null'') iterations that
consider composite subproblems of the form $\min_{y\in\rn}\{\Phi_{j,k}(y)+h(y)\}$
for the $k$-th subproblem and $j$-th iteration, until there is a
sufficient decrease in $\phi(z_{k})$. However, PF.APD chooses $\Phi_{j,k}$
to be a quadratic approximation of $\Phi_{k}$ centered on a specially
chosen point (see the update of $y_{k+1}$ in \prettyref{alg:pf_acg_line_search}),
while RPB chooses $\Phi_{j,k}$ to be the maximum of a different set
of quadratic approximations, which is generally more difficult to
minimize. Moreover, PF.APD uses values of $\nabla f(\cdot)$ and elements
of $\pt h(\cdot)$ in its construction of $\Phi_{j,k}$ whereas RPB
uses elements of the limiting subdifferential of $\phi$. 

Second, compared to the Catalyst Acceleration Framework (CAF) in \cite{paquette2017catalyst},
both PF.APD and CAF consider inexactly solving proximal subproblems
as in \eqref{eq:approx_scheme} using ACG subroutine and subproblem
termination conditions similar to \eqref{eq:gd_ineq1}--\eqref{eq:gd_ineq2}.
However, CAF obtains the inequality in \eqref{eq:approx_scheme} by
inexactly solving a second prox-subproblem (with a different prox
center) and applying an extra interpolation step. As a consequence,
CAF requires nearly double the work of PF.APD. Moreover, the line
search strategy (analogous to \prettyref{alg:pf_acg_line_search}
and \prettyref{alg:pf_apd_line_search}) employed by CAF in \cite[Algorithm 3]{paquette2017catalyst}
is static in that it prescribes a large number of ACG iterations,
whereas the line search strategy in PF.APD is dynamic in that it checks
a finite set of simple inequalities at each ACG iteration.

\subsection{Contributions}

Throughout, we refer to the two types of algorithms described in the
previous subsection. Given a starting point $z_{0}\in\dom h$ and
a tolerance $\varepsilon>0$, it is shown that PF.APD has the following
nice properties:
\begin{enumerate}[left=0.5em]
\item[(i)] for any $\hat{m}>0$, it always obtains a pair $(\bar{z},\bar{v})\in\dom h\times\rn$
satisfying \eqref{eq:stationary_def};
\item[(ii)] if $f$ is nonconvex, then it stops in ${\cal O}(\sqrt{mM\Delta_{0}}/\varepsilon^{2})$
resolvent evaluations\footnote{The notation $\tilde{O}(\cdot)$ ignores any terms that logarithmically
depend on the tolerance $\varepsilon$.};
\item[(iii)] if $f$ is convex, then it stops in ${\cal O}(\sqrt{M}\min\{\sqrt{\Delta_{0}}/\varepsilon,d_{0}/\sqrt{\varepsilon}\})$
resolvent evaluations;
\end{enumerate}
Both of the above complexity bounds are optimal (up to logarithmic
terms in) in terms of $\Delta_{0}$, $M$, $m$, and $\varepsilon$
(although suboptimal by a factor of $\sqrt{d_{0}}$ in the convex
case). Moreover, it appears to be the first time that a \emph{type-II}
parameter-free method has obtained such bounds\footnote{Compare this to the complexity-optimal methods in \cite{nesterov2012make}
and \cite{kong2019complexity} which require knowledge of $d_{0}$
and $(m,M)$, respectively.}. Improved iteration complexity bounds are also obtained when $d_{0}$
is known. Also, all of the above results are obtained under the mild
assumption that the optimal value in \eqref{eq:main_prb} is finite
and does not assume the boundedness of $\dom h$ (cf. \cite{liang2021fista,sim2020fista})
nor that an optimal solution of \eqref{eq:main_prb} exists.

For convenience, we compare in \prettyref{tab:compl_compare} the
best iteration complexity bounds of some of the parameter-free methods
listed in the previous subsection with two instances of PF.APD. For
shorthands, PGD is the adaptive proximal gradient descent method in
\cite{nesterov2013gradient}, UPF is the UPFAG method in \cite{ghadimi2019generalized},
ANCF is the ADAP-NC-FISTA method in \cite{liang2021fista}, VRF is
the VAR-FISTA method in \cite{sim2020fista}, and APD is as in \prettyref{alg:pf_apd}
in this paper with $m_{0}=1$.

\begin{table}[!tbh]
\begin{centering}
\begin{tabular}{>{\centering}p{2.2cm}|c|c|c}
\noalign{\vskip0.1em}
\textbf{\footnotesize{}Algorithm} & \textbf{\footnotesize{}$f$ convex} & \textbf{\footnotesize{}$f$ nonconvex} & \textbf{\footnotesize{}$D_{h}<\infty$}\tabularnewline[0.1em]
\hline 
\noalign{\vskip0.2em}
{\scriptsize{}PGD\cite{nesterov2013gradient}} & {\footnotesize{}${\cal O}\left(\frac{M^{3/2}d_{0}}{\varepsilon}\right)$} & {\footnotesize{}${\cal O}\left(\frac{M^{2}\Delta_{0}}{\varepsilon^{2}}\right)$} & {\scriptsize{}No}\tabularnewline[0.2em]
\noalign{\vskip0.2em}
{\scriptsize{}UPF\cite{ghadimi2019generalized}} & {\scriptsize{}N/A} & {\footnotesize{}${\cal O}\left(\frac{M\Delta_{0}}{\varepsilon^{2}}\right)$} & {\scriptsize{}No}\tabularnewline[0.2em]
\noalign{\vskip0.2em}
{\scriptsize{}ANCF\cite{liang2021fista}} & {\footnotesize{}${\cal O}\left(\frac{M^{2/3}[\Delta_{0}^{1/3}+d_{0}^{2/3}]}{\varepsilon^{2/3}}+\frac{MD_{h}}{\varepsilon}\right)$} & {\footnotesize{}${\cal O}\left(mM^{2}\left[\frac{mD_{h}^{2}+\Delta_{0}}{\varepsilon^{2}}\right]\right)$} & {\scriptsize{}Yes}\tabularnewline[0.2em]
\noalign{\vskip0.2em}
{\scriptsize{}VRF\cite{sim2020fista}} & {\footnotesize{}${\cal O}\left(\frac{M^{2/3}[\Delta_{0}^{1/3}+D_{h}^{2/3}]}{\varepsilon^{2/3}}\right)$} & {\footnotesize{}${\cal O}\left(mM^{2}D_{h}^{2}\left[\frac{1+m^{2}}{\varepsilon^{2}}\right]\right)$} & {\scriptsize{}Yes}\tabularnewline[0.2em]
\noalign{\vskip0.2em}
\textbf{\scriptsize{}APD}{\footnotesize{}\medskip{}
} & {\footnotesize{}${\cal O}\left(\sqrt{M}\left[\min\left\{ \frac{\sqrt{\Delta_{0}}}{\varepsilon},\frac{d_{0}}{\sqrt{\varepsilon}}\right\} \right]\right)$} & {\footnotesize{}${\cal O}\left(\frac{\sqrt{mM}\Delta_{0}}{\varepsilon^{2}}\right)$} & {\scriptsize{}No}\tabularnewline[0.2em]
\hline 
\noalign{\vskip0.2em}
\textbf{\footnotesize{}Known Lower Bounds} & {\footnotesize{}$\Omega\left(\sqrt{M}\left[\min\left\{ \frac{\sqrt{\Delta_{0}}}{\varepsilon},\sqrt{\frac{d_{0}}{\varepsilon}}\right\} \right]\right)$} & {\footnotesize{}$\Omega\left(\frac{\sqrt{mM}\Delta_{0}}{\varepsilon^{2}}\right)$} & {\small{}-}\tabularnewline[0.2em]
\end{tabular}
\par\end{centering}
\caption{Lower bounds and iteration-complexity bounds of various parameter-free
\emph{type-II} composite optimization algorithms for finding $\varepsilon$-stationary
points as in \eqref{eq:stationary_def}. The scalar $D_{h}$ denotes
the diameter of $\protect\dom h$ and it is assumed that $d_{0}$,
$\Delta_{0}$, $m$, and $M$ are not known but $M$ is greater than
or equal to $m$ for the listed algorithms. The lower bounds for the
convex (resp. nonconvex) case can be found in \cite[Theorem 1]{carmon2021lower}
(resp. \cite[Theorem 4.5]{zhou2019lower}). \label{tab:compl_compare}}
\end{table}

Notice that the analysis for UPFAG does not include an iteration complexity
bound for finding stationary points when $f$ is convex, while ANCF
and VRF suffer from the requirement that $\dom h$ must be bounded.
Moreover, up until this point, PGD was the only parameter-free \emph{type-II}
algorithm with an established iteration complexity bound for the unbounded
case when $f$ is convex. None of the parameter-free methods before
this work, in the nonconvex setting, could obtain the optimal complexity
bound in \cite{kong2019complexity}.

In addition to the development of PF.APD, some details are given regarding
how PF.APD could be used in other existing optimization frameworks,
including min-max smoothing and penalty frameworks for constrained
optimization. The main advantages of these resulting frameworks are
that (i) they are parameter-free and (ii) they have improved complexities
when $f$ in \eqref{eq:main_prb} is convex, without requiring any
adjustments to their inputs.

Finally, numerical experiments are given to support the practical
efficiency of PF.ADP on some randomly generated problem instances.
These experiments specifically show that PF.APD consistently outperforms
several existing parameter-free methods in practice.

\subsection{Organization}

\prettyref{sec:background} presents background material. \prettyref{sec:pf_algs}
presents PF.ACG, PF.APD, and their iteration complexity bounds. \prettyref{sec:tech_proofs}
gives the proofs of several important technical results. \prettyref{sec:extensions}
describes how PF.APD can be used in existing optimization frameworks.
\prettyref{sec:numerical} presents some numerical experiments. \prettyref{sec:concluding}
gives some concluding remarks. Several technical appendices follow
after the above sections.\smallskip{}

\subsection{Notation and Basic Definitions}

$\r_{+}$ and $\r_{++}$ denote the set of nonnegative and positive
real numbers, respectively. $\r^{n}$ denotes an $n$-dimensional
Euclidean space with inner product and norm denoted by $\langle\cdot,\cdot\rangle$
and $\|\cdot\|$, respectively. ${\rm dist}(x,X)$ denotes the Euclidean
distance of a point $x$ to a set $X$. For any $t>0$, we denote
$\log^{+1}(t):=\max\{\log t,1\}$. For a function $h:\r^{n}\to(-\infty,\infty]$
we denote $\dom h:=\{x\in\r^{n}:h(x)<+\infty\}$ to be the domain
of $h$. Moreover, $h$ is considered proper if $\dom h\ne\emptyset$.
The set of all lower semi-continuous proper convex functions defined
in $\r^{n}$ is denoted by $\cConv(\rn)$. The convex subdifferential
of a proper function $h:\r^{n}\to(-\infty,\infty]$ is given by 
\begin{equation}
\partial h(z):=\{u\in\r^{n}:h(z')\geq h(z)+\inner u{z'-z},\quad\forall z'\in\r^{n}\}\label{def:epsSubdiff}
\end{equation}
for every $z\in\r^{n}$. If $\psi$ is a real-valued function which
is differentiable at $\bar{z}\in\r^{n}$, then its affine/linear approximation
$\ell_{\psi}(\cdot,\bar{z})$ at $\bar{z}$ is given by 
\begin{equation}
\ell_{\psi}(z;\bar{z}):=\psi(\bar{z})+\inner{\nabla\psi(\bar{z})}{z-\bar{z}}\quad\forall z\in\r^{n}.\label{eq:defell}
\end{equation}

\section{Background}

\label{sec:background}

This section gives some necessary background for presenting PF.ACG
and PF.APD. More specifically, \prettyref{subsec:prb_descr} describes
and comments on the problem of interest, while \prettyref{subsec:descent_scheme}
presents a general proximal descent scheme which serves as a template
for PF.APD.

\subsection{Problem of Interest}

\label{subsec:prb_descr}

To reiterate, we are interested in the following composite optimization
problem:
\begin{center}
\vspace*{1em}%
{\fboxrule 0.5pt\fboxsep 6pt\fbox{\begin{minipage}[t]{0.85\columnwidth}%
\textbf{Problem }$\boldsymbol{{\cal CO}}$: Given $\varepsilon\in\r_{++}$
and a function $\phi=f+h$ satisfying:
\begin{enumerate}[start=1,label={{\color{green}\tt$\langle$A\arabic*$\rangle$}}]
\item $h\in\cConv(\rn)$ and the resolvent $(\lam\pt h+{\rm id})^{-1}$
is easy to compute for any $\lam>0$\label{asm:a1},
\item $f$ is continuously differentiable on an open set $\Omega\supseteq\dom h$,
and $\nabla f$ is ${\cal M}$-Lipschitz continuous on $\dom h$ for
some ${\cal M}\in\r_{++}$\label{asm:a2},
\item $\phi_{*}=\inf_{z\in\rn}\phi(z)>-\infty$\label{asm:a3},
\end{enumerate}
find a pair $(\bar{z},\bar{v})\in\dom h\times\rn$ satisfying \eqref{eq:stationary_def}.%
\end{minipage}}}\vspace*{1em}
\par\end{center}

Of the three above assumptions, only \ref{asm:a1} is a necessary
condition that is used to ensure PF.APD is well-defined. Assumptions
\ref{asm:a2}--\ref{asm:a3}, on the other hand, are sufficient conditions
that are used to show that PF.APD stops in a finite number of iterations.
It is possible to replace assumption \ref{asm:a2} with more general
smoothness conditions (e.g., Hölder continuity \cite{nesterov2015universal,ghadimi2019generalized})
at the cost of a possibly more complicated analysis. It is known\footnote{The proof of the forward direction is well-known (see, for example,
\cite{nesterov2018lectures,beck2017first}) while the proof of the
reverse direction can be found, for example, in \cite[Proposition 2.1.55]{kong2021accelerated}.
For the special case where $f$ is convex and real-valued, the proof
of the reverse direction can be found, for example, in \cite[Theorem 18.15]{bauschke2011convex}
and \cite[2.1.5]{nesterov2003introductory}.} that assumption \ref{asm:a2} holds if and only if 
\begin{equation}
|f(z)-\ell_{f}(z;z')|\leq\frac{{\cal M}}{2}\|z-z'\|^{2},\quad\forall z,z'\in\dom h,\label{eq:descent_bd}
\end{equation}
which implies $({\cal M},{\cal M})$ is a curvature pair of $\phi$.

We now comment on criterion \eqref{eq:stationary_def}. First, it
is related to the directional derivative of $\phi$:
\begin{align*}
\min_{\|d\|=1}\phi'(z;d) & =\min_{\|d\|=1}\max_{\zeta\in\pt h(z)}\left\langle \nabla f(z)+\zeta,d\right\rangle =\max_{\zeta\in\pt h(z)}\min_{\|d\|=1}\left\langle \nabla f(z)+\zeta,d\right\rangle \\
 & =-\min_{\zeta\in\pt h(z)}\|\nabla f(z)+\zeta\|=-{\rm dist}(0,\nabla f(z)+\partial h(z)).
\end{align*}
Consequently, if $\bar{z}\in\dom h$ is a local minimum of $\phi$
then $\min_{\|d\|=1}\phi'(\bar{z};d)\geq0$ and the above relation
implies that \eqref{eq:stationary_def} holds with $\varepsilon=0$.
That is, \eqref{eq:stationary_def} is a necessary condition for local
optimality of a point $\bar{z}\in\dom h$. Second, when $f$ is convex
then \eqref{eq:stationary_def} with $\varepsilon=0$ implies that
$0\in\nabla f(\bar{z})+\pt h(\bar{z})=\partial\phi(\bar{z})$ and
$\bar{z}$ is a global minimum. Given the first comment, \eqref{eq:stationary_def}
is equivalent to global optimality of a point $\bar{z}\in\dom h$
when $f$ is convex. 

\subsection{General Proximal Descent Scheme}

\label{subsec:descent_scheme}

Our interest in this subsection is the general proximal descent scheme
in \prettyref{alg:gpds}, which follows the ideas in \eqref{eq:intro_incl}--\eqref{eq:intro_invar2}.
Its iteration scheme serves as a template for the PF.APD presented
in \prettyref{subsec:apd}.

\begin{comment}
\label{alg:gpds}
\end{comment}

\begin{algorithm}[htb]
\caption{\small General Proximal Descent Scheme}
\label{alg:gpds}
\begin{algorithmic}[1]

\StateO \texttt{\small{}Data}{\small{}: $(f,h)$ as in \ref{asm:a1}--\ref{asm:a3},
$z_{0}\in\dom h$;}{\small\par}

\StateO \texttt{\small{}Parameters:}{\small{} $\theta,\rho\in\r_{+}$;}{\small\par}

\For{$k\gets0,1,\ldots$}

\State \textbf{find} $(z_{k+1},u_{k+1})\in\dom h\times\rn$ and $m_{k+1}\in\r_{++}$
satisfying\vspace*{-0.75em}
\begin{align}
 & u_{k+1}\in\nabla f(z_{k+1})+2m_{k+1}(z_{k+1}-z_{k})+\partial h(z_{k+1}),\label{eq:gd_incl}\\
 & \|u_{k+1}+2m_{k+1}(z_{k}-z_{k+1})\|^{2}\leq2\theta m_{k+1}\left[\phi(z_{k})-\phi(z_{k+1})\right],\label{eq:gd_ineq1}\\
 & \|u_{k+1}\|^{2}\leq2(\rho m_{k+1})^{2}\|z_{k+1}-z_{k}\|^{2}.\label{eq:gd_ineq2}
\end{align}
\vspace{-1.5em}

\EndFor % (k >= 0)

\end{algorithmic}
\end{algorithm}

Before presenting the properties of \prettyref{alg:gpds}, let us
comment on its steps. First, \eqref{eq:gd_incl}--\eqref{eq:gd_ineq2}
are analogous to \eqref{eq:intro_incl}--\eqref{eq:intro_invar2}
because of assumption \ref{asm:a1}. Second, if $f+m_{k+1}\|\cdot\|^{2}$
is convex and $u_{k+1}=0$ then \eqref{eq:gd_incl} implies that 
\[
z_{k+1}=\argmin_{z\in\dom h}\left\{ \frac{\phi(z)}{2m_{k+1}}+\frac{1}{2}\|z-z_{k+1}\|^{2}\right\} ,
\]
which is a proximal point update with stepsize $1/(2m_{k+1})$. Third,
\eqref{eq:gd_ineq1} implies that \prettyref{alg:gpds} is a descent
scheme, i.e., $\phi(z_{k+1})\leq\phi(z_{k})$ for $k\geq0$. Hence,
in view of the second comment, this justifies its qualifier as a ``proximal
descent'' scheme. 

It is also worth mentioning that \eqref{eq:gd_ineq1}--\eqref{eq:gd_ineq2}
are similar to conditions in existing literature. More specifically,
a version of \eqref{eq:gd_ineq1} can be found in the descent scheme
of \cite{kong2020efficient}, while an inequality similar to \eqref{eq:gd_ineq2}
can be found in the GIPP framework of \cite{kong2019complexity} with
$\sigma=2(\rho m_{k+1})^{2}$, $\tilde{\varepsilon}=0$, and $v_{k+1}=u_{k+1}/m_{k+1}$.
However, the addition of condition \eqref{eq:gd_incl} appears to
be new.

We now present the most important properties of \prettyref{alg:gpds}.
The first result supports the importance of conditions \eqref{eq:gd_incl}--\eqref{eq:gd_ineq1}.
\begin{lemma}
\label{lem:basic_gd_compl}Given $z_{0}\in X$, let $\{(z_{k+1},u_{k+1})\}_{k\geq0}$
denote a sequence of iterates satisfying \eqref{eq:gd_incl}--\eqref{eq:gd_ineq1}.
Moreover, let $\Delta_{0}$ be as in \eqref{eq:optimality_residuals},
and define 
\[
v_{k+1}:=u_{k+1}+2m_{k+1}(z_{k}-z_{k+1}),\quad\Lambda_{k+1}:=\sum_{j=0}^{k}\frac{1}{m_{j+1}},\quad\forall k\geq0.
\]
Then, for every $k\geq0$,
\end{lemma}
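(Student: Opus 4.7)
My plan is to handle the two parts separately, since each follows directly from one of the hypotheses \eqref{eq:gd_incl}--\eqref{eq:gd_ineq1}, with part (b) requiring a standard telescoping argument on top of the descent inequality.

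For part (a), I would simply rearrange the defining inclusion \eqref{eq:gd_incl}. Moving the term $2m_{k+1}(z_{k+1}-z_k)$ to the left-hand side gives
\[
u_{k+1} + 2m_{k+1}(z_k - z_{k+1}) \in \nabla f(z_{k+1}) + \partial h(z_{k+1}),
\]
and the left-hand side is precisely $v_{k+1}$ by definition. This is an immediate identification, no estimation involved.

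For part (b), the key observation is that \eqref{eq:gd_ineq1} can be rewritten as
\[
\frac{\|v_{k+1}\|^2}{m_{k+1}} \leq 2\theta\bigl[\phi(z_k) - \phi(z_{k+1})\bigr],
\]
after dividing through by $m_{k+1}$. I would then sum this inequality from $j=0$ to $j=k$; the right-hand side telescopes to $2\theta[\phi(z_0) - \phi(z_{k+1})]$, which is bounded above by $2\theta\Delta_0$ using the definition of $\Delta_0$ in \eqref{eq:optimality_residuals} together with \ref{asm:a3}. On the left-hand side, I would lower-bound the sum $\sum_{j=0}^{k}\|v_{j+1}\|^2/m_{j+1}$ by $\min_{0 \leq j \leq k}\|v_{j+1}\|^2 \cdot \Lambda_{k+1}$. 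Rearranging then yields the claimed bound.

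I do not expect a genuine obstacle here: part (a) is an algebraic rearrangement, and part (b) is the canonical ``weighted min $\leq$ weighted average'' telescoping trick, made available by the fact that \eqref{eq:gd_ineq1} was designed to produce exactly this kind of descent-with-scaling pattern. The only subtlety worth flagging is that \eqref{eq:gd_ineq2} plays no role in this lemma; it will only be needed later, to turn the residual bound on $\|v_{j+1}\|$ into a bound on iterate displacement.
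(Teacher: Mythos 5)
Your proposal is correct and follows essentially the same route as the paper: part (a) is the same direct rearrangement of \eqref{eq:gd_incl}, and part (b) is the identical divide-by-$m_{j+1}$, telescope, and bound-the-minimum-by-the-weighted-sum argument, finishing with $\phi(z_{0})-\phi(z_{k+1})\leq\Delta_{0}$. No discrepancies to report.
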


\begin{itemize}
\item[(a)] $v_{k+1}\in\nabla f(z_{k+1})+\partial h(z_{k+1})$;
\item[(b)] $\min\limits _{0\leq j\leq k}\|v_{j+1}\|^{2}\leq2\theta\Delta_{0}\Lambda_{k+1}^{-1}$.
\end{itemize}
\begin{proof}
(a) This follows immediately from \eqref{eq:gd_incl} and the definition
of $v_{k+1}$.

(b) Summing up both sides of \eqref{eq:gd_ineq1} from $0$ to $k$,
the definition of $v_{k+1}$, and the definition of $\phi_{*}$, we
have that
\begin{align*}
\Lambda_{k+1}\min_{0\leq j\leq k}\|v_{j+1}\|^{2} & \leq\sum_{j=0}^{k}\frac{\|v_{j+1}\|^{2}}{m_{j+1}}\overset{\eqref{eq:gd_ineq1}}{\leq}2\theta\sum_{j=0}^{k}\left[\phi(z_{j})-\phi(z_{j+1})\right]\\
 & =2\theta\left[\phi(z_{0})-\phi(z_{k+1})\right]\leq2\theta\left[\phi(z_{0})-\phi_{*}\right]=2\theta\Delta_{0}.
\end{align*}
\end{proof}
Notice that \prettyref{lem:basic_gd_compl}(b) implies that if $\lim_{k\to\infty}\Lambda_{k+1}\to\infty$
then we have that $\lim_{k\to\infty}\min_{j\leq k}\|v_{j+1}\|\to0$.
Moreover, if $\sup_{k\geq0}m_{k+1}<\infty$ then for any $\varepsilon>0$,
there exists some finite $j\geq0$ such that $\|v_{j+1}\|\leq\varepsilon$.

The next result shows that if $m_{k+1}$ is bounded relative to the
global topology of $f$, and conditions \eqref{eq:gd_incl}--\eqref{eq:gd_ineq2}
hold, then a more refined bound of $\min_{j\leq k}\|v_{j+1}\|$ can
be obtained. To keep the notation concise, we make use of the following
quantity: 
\begin{equation}
R_{\tau}(\hat{z}):=\inf_{z\in\rn}\left\{ R_{\tau}(z,\hat{z}):=\frac{\phi(z)-\phi_{*}}{\tau}+\frac{1}{2}\|z-\hat{z}\|^{2}\right\} .\label{eq:R_phi_def}
\end{equation}
It is easy to see that $R_{\tau}(z')$ is the Moreau envelope of $\phi/\tau$
at $z'$ shifted by $-\phi_{*}/\tau$.
\begin{lemma}
\label{lem:gd_compl_bd}Given $z_{0}\in X$, let $\{(v_{j+1},z_{j+1},\Lambda_{j+1})\}_{j\geq0}$
be as in \prettyref{lem:basic_gd_compl} and $k\geq0$ be fixed. Moreover,
suppose \eqref{eq:gd_ineq2} holds and that there exists $\tilde{m}>0$
such that $f+\tilde{m}\|\cdot\|^{2}/2$ is convex. If $\max_{0\leq j\leq k}m_{j+1}\leq\nu\tilde{m}$
for some $\nu\in(0,\rho^{-2}]$, then
\begin{equation}
\phi(z_{k+1})+m_{k+1}\left(1-\rho^{2}\nu\right)\|z_{k+1}-z_{k}\|^{2}\leq\inf_{z\in\rn}\left\{ \phi(z)+m_{k+1}\|z-z_{k}\|^{2}\right\} .\label{eq:prox_bd}
\end{equation}
Furthermore, if $k\geq1$ then it holds that
\begin{equation}
\min_{1\leq j\leq k}\|v_{j+1}\|^{2}\leq\frac{4\theta m_{1}}{\Lambda_{k+1}-m_{1}^{-1}}\left[R_{2m_{1}}(z_{0})-\left(\frac{1-\rho^{2}\nu}{2}\right)\|z_{1}-z_{0}\|^{2}\right].\label{eq:vj_ref_bd}
\end{equation}
\end{lemma}

\begin{proof}
Using the assumption that $m_{k+1}\geq\tilde{m}$ and \eqref{eq:gd_incl},
we have that $f(\cdot)+m_{k+1}\|\cdot-z_{k}\|^{2}$ is $\tilde{m}$-strongly
convex and, hence,
\begin{align}
u_{k+1} & \in\nabla f(z_{k+1})+2m_{k+1}(z_{k+1}-z_{k})+\partial h(z_{k+1})\nonumber \\
 & =\nabla f(z_{k+1})-\tilde{m}(z_{k+1}-z_{k+1})+2m_{k+1}(z_{k+1}-z_{k})+\partial h(z_{k+1})\nonumber \\
 & =\partial\left(\phi-\frac{\tilde{m}}{2}\|\cdot-z_{k+1}\|^{2}+m_{k+1}\|\cdot-z_{k}\|^{2}\right)(z_{k+1}).\label{eq:gd_tech_incl}
\end{align}
Using \eqref{eq:gd_tech_incl}, \eqref{eq:gd_ineq2}, and the bounds
$m_{k+1}\leq\nu\tilde{m}$ and $\langle a,b\rangle\geq-\nu\|a\|^{2}/(2m_{k+1})-m_{k+1}\|b\|^{2}/(2\nu)$
for any $a,b\in\rn$, it holds for any $z\in\rn$ that
\begin{align*}
 & \phi(z)+m_{k+1}\|z-z_{k}\|^{2}\\
 & \overset{\eqref{eq:gd_tech_incl}}{\geq}\phi(z_{k+1})+m_{k+1}\|z_{k}-z_{k+1}\|^{2}+\frac{\tilde{m}}{2}\|z-z_{k+1}\|^{2}+\left\langle u_{k+1},z-z_{k+1}\right\rangle \\
 & \geq\phi(z_{k+1})+m_{k+1}\|z_{k}-z_{k+1}\|^{2}-\frac{\nu}{2m_{k+1}}\|u_{k+1}\|^{2}+\frac{\tilde{m}-m_{k+1}/\nu}{2}\|z-z_{k+1}\|^{2}\\
 & \overset{\eqref{eq:gd_ineq2}}{\geq}\phi(z_{k+1})+m_{k+1}\left(1-\rho^{2}\nu\right)\|z_{k}-z_{k+1}\|^{2}+\frac{\tilde{m}-m_{k+1}/\nu}{2}\|z-z_{k+1}\|^{2}\\
 & \geq\phi(z_{k+1})+m_{k+1}\left(1-\rho^{2}\nu\right)\|z_{k}-z_{k+1}\|^{2}
\end{align*}
which implies \eqref{eq:prox_bd} as $z\in\rn$ was arbitrary. To
show \eqref{eq:vj_ref_bd}, we use \eqref{eq:prox_bd} at $k=0$,
the bound $\phi(z_{k+1})\geq\phi_{*}$, \eqref{eq:gd_ineq1}, and
the definition of $v_{k+1}$ to conclude that
\begin{align*}
 & R_{2m_{1}}(z_{0})-\left(\frac{1-\rho^{2}\nu}{2}\right)\|z_{1}-z_{0}\|^{2}\\
 & =\inf_{z\in\rn}\left\{ \frac{\phi(z)-\phi_{*}}{2m_{1}}+\frac{1}{2}\|z-z_{0}\|^{2}\right\} -\left(\frac{1-\rho^{2}\nu}{2}\right)\|z_{1}-z_{0}\|^{2}\overset{\eqref{eq:prox_bd}}{\geq}\frac{\phi(z_{1})-\phi_{*}}{2m_{1}}\\
 & \geq\frac{\phi(z_{1})-\phi(z_{k+1})}{2m_{1}}=\frac{\sum_{j=1}^{k}\left[\phi(z_{j})-\phi(z_{j+1})\right]}{2m_{1}}\\
 & \overset{\eqref{eq:gd_ineq1}}{\geq}\frac{1}{4\theta m_{1}}\sum_{j=1}^{k}\frac{\|v_{j+1}\|^{2}}{m_{j+1}}\geq\frac{\sum_{j=1}^{k}m_{j+1}^{-1}}{4\theta m_{1}}\left(\inf_{1\leq j\leq k}\|v_{j+1}\|^{2}\right)\\
 & =\frac{\Lambda_{k+1}-m_{1}^{-1}}{4\theta m_{1}}\left(\text{\ensuremath{\inf_{1\leq j\leq k}\|v_{j+1}\|^{2}}}\right).
\end{align*}
\end{proof}
Similar to the previous lemma, the above result also implies that
if $\lim_{k\to\infty}\Lambda_{k+1}\to\infty$ then we have $\lim_{k\to\infty}\min_{j\leq k}\|v_{j+1}\|\to0$.
However, it is more general in the sense that the rate of convergence
depends on $R_{2m_{1}}(z_{0})$ instead of $\Delta_{0}$, and the
former can be bounded as 
\begin{equation}
R_{2m_{1}}(z_{0})\leq\min\left\{ R_{2m_{1}}(z_{0},z_{0}),R_{2m_{1}}(z_{*},z_{0})\right\} \leq\frac{1}{2}\min\left\{ \frac{\Delta_{0}}{m_{1}},d_{0}^{2}\right\} ,\label{eq:R_bd}
\end{equation}
where $z_{*}$ is any optimal solution of \eqref{eq:main_prb} that
is the closest to $z_{0}$ and $(\Delta_{0},d_{0})$ are as in \eqref{eq:optimality_residuals}.
This fact will be important when we establish an iteration complexity
bound for PF.APD in the convex setting.

\section{Parameter-Free Algorithms}

\label{sec:pf_algs}

This section presents PF.ACG, PF.APD, and their iteration complexity
bounds. More specifically, \prettyref{subsec:acg} presents PF.ACG,
while \prettyref{subsec:apd} presents PF.APD.

It is also worth recalling that PF.APD is an implementation of the
general descent scheme of the previous section that repeatedly calls
PF.ACG to obtain a single iteration of the scheme mentioned above.

\subsection{PF.ACG Algorithm}

\label{subsec:acg}

Broadly speaking, PF.ACG is a modification of the well-known FISTA
\cite{florea2018accelerated,beck2009fast} algorithm for minimizing
$\mu$-strongly convex composite functions. Specifically, both PF.ACG
and FISTA consider the composite optimization problem 
\begin{equation}
\min_{x\in\rn}\left\{ \psi(x):=\psi^{s}(x)+\psi^{n}(x)\right\} \label{eq:acg_subprb}
\end{equation}
where $(\psi^{s},\psi^{n})$ satisfies the following assumptions:
\begin{enumerate}[start=1,label={{\color{green}\tt$\langle$B\arabic*$\rangle$}}]
\item $\psi^{n}\in\cConv(\rn)$ and the resolvent $(\lam\pt\psi^{n}+{\rm id})^{-1}$
is easy to compute for any $\lam>0$,\label{asm:b1}
\item $\psi^{s}$ is continuously differentiable on an open set $\Omega\supseteq\dom\psi^{n}$,
and $\nabla\psi^{s}$ is $L_{*}$-Lipschitz continuous on $\dom\psi^{n}$
for some $L_{*}\in\r_{++}$\label{asm:b2}.
\end{enumerate}
Similar to \eqref{eq:descent_bd}, note that \ref{asm:b2} implies
\begin{equation}
\left|\psi^{s}(x)-\ell_{\psi^{s}}(x;x')\right|\leq\frac{L_{*}}{2}\|x-x'\|^{2}\quad\forall x,x'\in\dom\psi^{n}.\label{eq:descent_acg}
\end{equation}

PF.ACG differs from FISTA in that it adds two stopping conditions
that help implement a single iteration of \prettyref{alg:gpds}. Specifically,
for a given function pair $(f,h)$ satisfying \ref{asm:a1}--\ref{asm:a2},
hyperparameters $(\sigma,\theta,\mu)\in\mathbb{R}_{++}^{3}$, and
an initial point $\hat{z}\in{\rm dom}\ h$, if PF.ACG is invoked with
\begin{equation}
\psi^{s}(\cdot)=\frac{f(\cdot)}{2\hat{m}}+\frac{1}{2}\|\cdot-\hat{z}\|^{2},\quad\psi^{n}(\cdot)=\frac{h(\cdot)}{2\hat{m}},\label{eq:psi_sn_vals}
\end{equation}
for some $\hat{m}>0$, then either (i) PF.ACG has found a pair $(y,u)$
satisfying conditions \eqref{eq:gd_incl}--\eqref{eq:gd_ineq2} with
$(z_{k+1},u_{k+1},m_{k+1},z_{k})=(y,u,m,\hat{z})$, or (ii) some local
$\mu$-strong convexity condition has failed, and the estimate of
$\mu$ or the function pair $(\psi^{s},\psi^{n})$ has to be changed.

We now present the details of PF.ACG and its key properties. To help
our discussion, we first give the complete pseudocode of PF.ACG through
\prettyref{alg:pf_acg_line_search} and \prettyref{alg:pf_acg}. More
specifically, \prettyref{alg:pf_acg_line_search} presents the accelerated
gradient FISTA update and (Lipschitz constant) line search strategy
used in PF.ACG, while \prettyref{alg:pf_acg} describes the other
steps of PF.ACG and how \prettyref{alg:pf_acg_line_search} is invoked. 

\begin{comment}
\label{alg:pf_acg_line_search}
\end{comment}

\begin{algorithm}[!h]
\caption{\small Line Search and Accelerated Gradient Step Subroutine}
\label{alg:pf_acg_line_search}
\begin{algorithmic}[1]

\StateO \texttt{\small{}Data}{\small{}: $(\psi^{s},\psi^{n})$ as
in \ref{asm:b1}--\ref{asm:b2}, $(\hat{y},\hat{x})\in\dom\psi^{n}\times\mathbb{R}^{n}$,
$\hat{A}\geq0$, $\mu\in\r_{++}$, $\hat{L}\in[\mu,\infty)$;}{\small\par}

\StateO \texttt{\small{}Hyper-parameters:}{\small{} $\beta\in(1,\infty)$;}{\small\par}

\StateO \texttt{\small{}Outputs}{\small{}: $(A,\tilde{x},y,x,L)\in\mathbb{R}_{+}\times\rn\times{\rm dom}\ \psi^{n}\times\mathbb{R}^{n}\times\mathbb{R}_{+}$
and function $q$;}{\small\par}

\State $\psi\gets\psi^{s}+\psi^{n}$

\For{$\ell\gets0,1,\ldots$}

\State $L\gets\hat{L}\beta^{\ell}$

\texttt{\textcolor{teal}{\small{}\hspace{-0.5em}$\triangleright$
Step 1:$\ $Accelerated gradient step.}}{\small\par}

\State $\xi\gets1+\mu\hat{A}\text{ and find \ensuremath{\hat{a}} satisfying }\hat{a}^{2}=\hat{\xi}(\hat{a}+\hat{A})/L$

\State $A\gets\hat{A}+\hat{a}$

\State $\tilde{x}\gets\frac{\hat{A}}{A}\hat{y}+\frac{\hat{a}}{A}\hat{x}$

\State $y\gets\argmin_{r\in\rn}\left\{ \ell_{\psi^{s}}(z;\tilde{x})+\psi^{n}(z)+\frac{L+\mu}{2}\|z-\tilde{x}\|^{2}\right\} $

\State $x\gets\hat{x}+\frac{\hat{a}}{1+A\mu}\left[L(y-\tilde{x})+\mu(y-\hat{x})\right]$

\texttt{\textcolor{teal}{\small{}\hspace{-0.5em}$\triangleright$
Step 2:$\ $Descent condition check.}}{\small\par}

\State \textbf{if} the inequality\vspace{-1em}
\begin{equation}
\psi^{s}(y)-\ell_{\psi^{s}}(y;\tilde{x})\leq\tfrac{L}{2}\|y-\tilde{x}\|^{2}\label{eq:acg_curv_cond}
\end{equation}
\vspace{-1.5em}

holds, then \textbf{return} $(A,\tilde{x},y,x,L)$

\EndFor % (l >= 0)

\end{algorithmic}
\end{algorithm}

\begin{comment}
\label{alg:pf_acg}
\end{comment}

\begin{algorithm}[!h]
\caption{\small Parameter-Free Accelerated Composite Gradient (PF.ACG) Algorithm}
\label{alg:pf_acg}
\begin{algorithmic}[1]

\StateO \texttt{\small{}Data}{\small{}: $(\psi^{s},\psi^{n})$ as
in \ref{asm:b1}--\ref{asm:b2}, $y_{0}\in\dom\psi^{n}$, $\mu\in\r_{++}$,
$L_{0}\in[\mu,\infty)$;}{\small\par}

\StateO \texttt{\small{}Hyper-parameters:}{\small{} $\sigma\in\r_{++}$,
$\theta\in(2,\infty)$, $\beta\in(1,\infty)$;}{\small\par}

\StateO \texttt{\small{}Outputs}{\small{}: $(y_{j+1},u_{j+1},L_{j+1})\in\dom\psi^{n}\times\rn\times\r_{++}$;}{\small\par}

\State $(x_{0},A_{0})\gets(y_{0},0)$

\State $\psi(\cdot)\gets\psi^{s}(\cdot)+\psi^{n}(\cdot)$

\For{$j\gets0,1,\ldots$}

\texttt{\textcolor{teal}{\small{}\hspace{-0.5em}$\triangleright$
Step 1:$\ $Line search for $L_{j+1}$ and accelerated gradient step.}}{\small\par}

\State \textbf{call} \prettyref{alg:pf_acg_line_search} with data
$(\psi^{s},\psi^{n})$, $(\hat{y},\hat{x})\equiv(y_{j},x_{j})$, $\hat{A}\equiv A_{j}$,
$\hat{\xi}\equiv\xi_{j}$, $\mu$,

\StateO \quad{}$\hat{L}\equiv L_{j}$ and hyper-parameter $\beta$
to obtain $(A_{j+1},\tilde{x}_{j},y_{j+1},x_{j+1},L_{j+1})$

\texttt{\small{}\hspace{-0.5em}$\triangleright$ Step 2:$\ $``Bad''
termination check.}{\small\par}

\State $r_{j+1}\gets\nabla\psi^{s}(y_{j+1})-\nabla\psi^{s}(\tilde{x}_{j})+(L_{j+1}+\mu)(\tilde{x}_{j}-y_{j+1})$

\State \textbf{if }the inequalities\vspace{-1em}
\begin{equation}
\begin{aligned}\mu A_{j+1}\|y_{j+1}-\tilde{x}_{j}\|^{2} & \leq\|y_{j+1}-y_{0}\|^{2},\\
\psi(y_{0}) & \geq\psi(y_{j+1})+\left\langle r_{j+1},y_{0}-y_{j+1}\right\rangle ,
\end{aligned}
\label{eq:acg_cvx_cond}
\end{equation}
\vspace{-1em}

\emph{do} \emph{not} hold, then \textbf{return} $(y_{j+1},r_{j+1},L_{j+1})$

\texttt{\textcolor{teal}{\small{}\hspace{-0.5em}$\triangleright$
Step 3:$\ $``Good'' termination check.}}{\small\par}

\State \textbf{if }the inequalities\vspace{-1em}
\begin{equation}
\begin{aligned}\|r_{j+1}\|^{2} & \leq\sigma^{2}\|y_{j+1}-y_{0}\|^{2},\\
\|r_{j+1}+y_{0}-y_{j+1}\|^{2} & \leq\theta\left[\psi(y_{0})-\psi(y_{j+1})+\tfrac{1}{2}\|y_{j+1}-y_{0}\|^{2}\right],
\end{aligned}
\label{eq:acg_termination}
\end{equation}
\vspace{-1em}

hold, then \textbf{return} $(y_{j+1},r_{j+1},L_{j+1})$\label{ln:acg_good_stop}

\EndFor % (j >= 0)

\end{algorithmic}
\end{algorithm}

We next present some key properties about \prettyref{alg:pf_acg}
and its iterates. As their proof is mostly technical, we moved it
to \prettyref{subsec:prf_acg_tech_props}.
\begin{lemma}
\label{lem:acg_tech_props} For every $j\geq0$,
\begin{itemize}
\item[(a)] $A_{j+1}\geq(1/L_{0})\prod_{i=1}^{j}[1+\sqrt{\mu/(2L_{i})}]$ and
\begin{equation}
L_{j}\leq L_{j+1}\leq\bar{L}:=\max\{1,\alpha L_{*}\}.\label{eq:Lbar_def}
\end{equation}
\item[(b)] $r_{j+1}\in\nabla\psi^{s}(y_{j+1})+\pt\psi^{n}(y_{j+1})$;
\item[(c)] if $\psi^{s}$ is $\mu$-strongly convex, then \eqref{eq:acg_cvx_cond}
holds;
\item[(d)] if \eqref{eq:acg_cvx_cond} holds and 
\begin{equation}
A_{j+1}\geq\frac{16\bar{L}^{2}}{\mu}\max\left\{ \frac{1}{\sigma^{2}},\frac{4\theta}{\theta-2}\right\} =:{\cal A}_{\mu,\bar{L}}(\sigma,\theta)\label{eq:calA_def}
\end{equation}
then \eqref{eq:acg_termination} holds.
\end{itemize}
\end{lemma}

We now give a complexity bound for \prettyref{alg:pf_acg} and a condition
for guaranteeing its successful termination.
\begin{proposition}
\label{prop:acg_alg_props}The following properties hold about \prettyref{alg:pf_acg}:
\end{proposition}

\begin{itemize}
\item[(a)] it stops in
\begin{equation}
\left\lceil 1+2\sqrt{\frac{2\bar{L}}{\mu}}\log^{1+}\left\{ \bar{L}{\cal A}_{\mu,\bar{L}}(\sigma,\theta)\right\} \right\rceil ,\label{eq:acg_complexity}
\end{equation}
where $\bar{L}$ and ${\cal A}_{\mu,\bar{L}}$ are as in \eqref{eq:Lbar_def}
and \eqref{eq:calA_def}, respectively.
\item[(b)] if $\psi^{s}$ is $\mu$-strongly convex, then it always terminates
in its Step~3 with a triple $(y_{j+1},u_{j+1},L_{j+1})$ satisfying
\eqref{eq:acg_termination} and $L_{0}\leq L_{j+1}\leq\bar{L}$.
\end{itemize}
\begin{proof}
(a) Let $J+1$ denote the quantity in \eqref{eq:acg_complexity} and
suppose \prettyref{alg:pf_acg} has not terminated at the end of iteration
$J+1$. Moreover, denote ${\cal A}:={\cal A}_{\mu,\bar{L}}(\sigma,\theta)$.
Using \prettyref{lem:acg_tech_props}(a), we first have 
\begin{equation}
A_{J+1}\geq\frac{1}{L_{0}}\prod_{i=1}^{J}\left(1+\sqrt{\frac{\mu}{2L_{i}}}\right)\geq\frac{1}{\bar{L}}\left(1+\sqrt{\frac{\mu}{2\bar{L}}}\right)^{J}\label{eq:Aj_tech_bd}
\end{equation}
Using the above bound, the fact that $J\geq2\sqrt{2\bar{L}/\mu}\log(\bar{L}{\cal A})$
from the definition in \eqref{eq:acg_complexity}, the bound $\mu\leq\bar{L}$,
and the fact that $\log(1+t)\geq t/2$ on $t\in[0,1]$, it holds that
\[
\log(\bar{L}{\cal A})\leq\frac{J}{2}\sqrt{\frac{\mu}{2\bar{L}}}\leq J\log\left(1+\sqrt{\frac{\mu}{2\bar{L}}}\right)\overset{\eqref{eq:Aj_tech_bd}}{\leq}\log(\bar{L}A_{J+1})
\]
which implies $A_{J+1}\geq{\cal A}$. Hence, it follows from \prettyref{lem:acg_tech_props}(d)
that \eqref{eq:acg_termination} holds. In view of Step~3 of \prettyref{alg:pf_acg}
this implies that termination has to have occurred at or before iteration
$J+1$, which contradicts our initial assumption. Thus, \prettyref{alg:pf_acg}
must have terminated by iteration $J+1$.

(b) This follows immediately from part (a) and \prettyref{lem:acg_tech_props}(c).
\end{proof}
The last result of this subsection shows how to invoke \prettyref{alg:pf_acg}
so that its successful termination implements a single iteration of
\prettyref{alg:gpds}.
\begin{lemma}
\label{lem:acg_specialization}Suppose \prettyref{alg:pf_acg} is
called with $(\psi^{s},\psi^{n})$ as in \eqref{eq:psi_sn_vals} for
some $m>0$ and $\hat{z}\in\dom\psi^{n}$, $\sigma=\rho$, and $y_{0}=\hat{z}$.
If the call terminates in Step~3 with an output triple $(y_{j+1},r_{j+1},L_{j+1})$,
then the quadruple $(z_{k+1},u_{k+1},m_{k+1},z_{k})=(y_{j+1},2mr_{j+1},m,\hat{z})$
satisfies \eqref{eq:gd_incl}--\eqref{eq:gd_ineq2}.
\end{lemma}

\begin{proof}
Using \prettyref{lem:acg_tech_props}(b), it holds that 
\begin{align*}
u_{k+1}=2mr_{j+1} & \in2m\left[\nabla\psi^{s}(y_{j+1})+\pt\psi^{n}(y_{j+1})\right]\\
 & =\nabla f(z_{k+1})+2m_{k+1}(z_{k+1}-z_{k})+\pt\psi^{n}(z_{k+1}).
\end{align*}
which is exactly \eqref{eq:gd_incl}. Now, using the first inequality
in \eqref{eq:acg_termination}, the choice of $\sigma=1/(2\alpha)$,
and the fact that $y_{0}=\hat{z}=z_{k}$, we have
\[
\|u_{k+1}\|^{2}=4m^{2}\|r_{j+1}\|^{2}\overset{\eqref{eq:acg_termination}}{\leq}2\left(\rho m\right)^{2}\|y_{j+1}-y_{0}\|^{2}=2\left(\rho m_{k+1}\right)^{2}\|z_{k+1}-z_{k}\|^{2},
\]
which is exactly \eqref{eq:gd_ineq2}. Finally, the second condition
of \eqref{eq:acg_termination}, the relation $\psi(\cdot)=\phi(\cdot)/(2m_{k+1})+\|\cdot-y_{0}\|^{2}/2$,
and the fact that $y_{0}=\hat{z}=z_{k}$ imply 
\begin{align*}
 & \|u_{k+1}+2m_{k+1}(z_{k}-z_{k+1})\|^{2}=4m_{k+1}^{2}\|r_{j+1}+y_{j+1}-y_{0}\|^{2}\\
 & \overset{\eqref{eq:acg_termination}}{\leq}4\theta m_{k+1}^{2}\left[\psi(y_{0})-\psi(y_{j+1})+\frac{1}{2}\|y_{j+1}-y_{0}\|^{2}\right]=2\theta m_{k+1}\left[\phi(z_{k})-\phi(z_{k+1})\right],
\end{align*}
which is exactly \eqref{eq:gd_ineq1}. Combing all previous inequalities
yields the desired conclusion.
\end{proof}
Some remarks are in order. We first remark on \prettyref{alg:pf_acg_line_search}:
\begin{enumerate}[left=1em]
\item In view of \eqref{eq:descent_acg}, the number of iterations in its
$(j+1)$-th call stops is bounded above by $1+\log_{\beta}(L_{j+1}/L_{j})$.
\item The update for $y$ is equivalent to 
\[
y=\argmin_{z\in\dom\psi^{n}}\left\{ \frac{\psi^{n}(z)}{L+\mu}+\frac{1}{2}\left\Vert z-\left(\tilde{x}-\frac{\nabla\psi^{s}(\tilde{x})}{L+\mu}\right)\right\Vert ^{2}\right\} 
\]
which is a single call to the prox-oracle of $\psi^{n}/(L+\mu)$.
\item The descent condition \eqref{eq:acg_curv_cond} is well-known in existing
literature for adaptive FISTA-type methods (see, for example, \cite[Subsection 4.3]{parikh2014proximal}). 
\end{enumerate}
We now remark on \prettyref{alg:pf_acg} and its associated results:
\begin{enumerate}[left=1em, resume]
\item It is shown in \prettyref{lem:acg_tech_props} that (i) $r_{j+1}$
is a stationarity residual for the iterate $y_{j+1}$ and (ii) $\{L_{j}\}_{j\geq0}$
forms a nondecreasing sequence of nonnegative scalars.
\item Step~1 is generally where most of the computation is done, wherein
(possibly) multiple accelerated gradient steps are performed using
\prettyref{alg:pf_acg_line_search}. It is also the only step that
requires evaluating the prox oracle for $\psi^{n}$.
\item It is shown in \prettyref{prop:acg_alg_props}(b) that both inequalities
in Step~2 hold when $\psi^{s}$ is $\mu$-strongly convex. The first
(resp. second) inequality of \eqref{eq:acg_cvx_cond} is used to ensure
that the first (resp. second) inequality of \eqref{eq:acg_termination}
holds when enough iterations are performed. See the analysis in \prettyref{subsec:prf_acg_tech_props}
for more details.
\item Condition \eqref{eq:acg_termination} is chosen so that \prettyref{alg:pf_acg}
implements a single step of \prettyref{alg:gpds} if it stops in Step~3
and it is given the right inputs (see \prettyref{lem:acg_specialization}).
\item Suppose \prettyref{alg:pf_acg} terminates in $J$ iterations. Then,
the number iterations of \prettyref{alg:pf_acg_line_search} taken
by \prettyref{alg:pf_acg} is
\[
\sum_{j=0}^{J-1}\left[1+\log_{\beta}\frac{L_{j+1}}{L_{j}}\right]=J+\log_{\beta}\frac{L_{J}}{L_{0}}\leq J+\log_{\beta}\frac{\overline{L}}{L_{0}}.
\]
Thus, on average (up to a $(1/J)\log_{\beta}(\overline{L}/L_{0})$
additive term) \prettyref{alg:pf_acg} uses only one accelerated gradient
step or two function and prox oracle calls. It is worth mentioning
that Nesterov's universal fast gradient method \cite[Section 4]{nesterov2015universal}
uses on average (up to a $(1/J)\log_{\beta}(\overline{L}/L_{0})$
additive term) four function/prox oracle calls per invocation.
\end{enumerate}

\subsection{PF.APD Algorithm}

\label{subsec:apd}

Broadly speaking, PF.APD is a \emph{double-loop} method consisting
of \emph{outer iterations }and (possibly) several \emph{inner iterations
}per outer iteration. More specifically, the $(k+1)$-th outer iteration
of PF.APD repeatedly applies \prettyref{alg:pf_acg} to the proximal
subproblem
\[
z_{k+1}\approx\argmin_{z\in\dom h}\left\{ \frac{\phi(z)}{2\hat{m}}+\frac{1}{2}\|z-z_{k}\|^{2}\right\} ,
\]
for increasing values of $\hat{m}>0$, where $z_{k}$ is an approximate
solution to the $k$-th subproblem. On the other hand, the inner iterations
refer to the iterations performed by \prettyref{alg:pf_acg}.

We now present the details of PF.APD and its key properties. To help
our discussion, we first give the complete pseudocode of PF.APD through
\prettyref{alg:pf_acg_line_search} and \prettyref{alg:pf_apd}. More
specifically, \prettyref{alg:pf_acg_line_search} presents the (lower
curvature) line search strategy used in PF.APD, while \prettyref{alg:pf_apd}
describes the other steps of PF.APD and how \prettyref{alg:pf_apd_line_search}
is invoked. 

\begin{comment}
\label{alg:pf_apd_line_search}
\end{comment}

\begin{algorithm}[h]
\caption{\small Line Search and Proximal Descent Step}
\label{alg:pf_apd_line_search}
\begin{algorithmic}[1]

\StateO \texttt{\small{}Data}{\small{}: $(\psi^{s},\psi^{n},f,h)$
as in }\eqref{eq:psi_sn_vals}{\small{}, $\hat{z}\in\dom h$, $\hat{m}\in\r_{++}$,
$\hat{M}\in[m,\infty)$;}{\small\par}

\StateO \texttt{\small{}Hyper-parameters:}{\small{} $\rho\in(0,1)$,
$\theta\in(2,\infty)$, $\alpha\in(1,\infty)$, $\beta\in(1,\infty)$;}{\small\par}

\StateO \texttt{\small{}Outputs}{\small{}: $(z,u,m,M)\in\dom h\times\rn$;}{\small\par}

\State $M\gets\hat{M}$

\State $\phi(\cdot)\gets f(\cdot)+h(\cdot)$

\For{$\ell\gets0,1,\ldots$}

\State $m\gets\hat{m}\alpha^{\ell}$

\texttt{\textcolor{teal}{\small{}\hspace{-0.5em}$\triangleright$
Step 1:$\ $$(\ell+1)^{{\rm th}}$ proximal subproblem.}}{\small\par}

\State \textbf{call} \prettyref{alg:pf_acg} with data $(\psi^{s},\psi^{n})$,
$y_{0}\equiv\hat{z}$, {\small{}$\mu\equiv1/2$,}{\small\par}

\StateO \quad{}{\small{}$L_{0}\equiv M/(2m)+1$}, and hyper-parameters
$\sigma\equiv\rho$, {\small{}$\theta$, $\beta$, to obtain an}{\small\par}

\StateO \quad{}{\small{}output tuple $(z,r,L)$}{\small\par}

\State $u\gets2mr$

\State $M\gets2m(L-1)$ 

\texttt{\textcolor{teal}{\small{}\hspace{-0.5em}$\triangleright$
Step 2:$\ $Proximal descent check. }}{\small\par}

\State \textbf{if} the inequalities\vspace{-1em}
\begin{equation}
\begin{aligned}\|u+2m(z-\hat{z})\|^{2} & \leq2\theta m\left[\phi(\hat{z})-\phi(z)\right],\\
\|u\|^{2} & \leq2\left(\rho m\right)^{2}\|z-\hat{z}\|^{2},
\end{aligned}
\label{eq:gd_spec_ineq}
\end{equation}
\vspace{-1em}

hold, then \textbf{return} $(z,u,m,M)$

\EndFor % (k >= 0)

\end{algorithmic}
\end{algorithm}

\begin{comment}
\label{alg:pf_apd}
\end{comment}

\begin{algorithm}[h]
\caption{\small Parameter-Free Accelerated Proximal Descent (PF.APD) Algorithm}
\label{alg:pf_apd}
\begin{algorithmic}[1]

\StateO \texttt{\small{}Data}{\small{}: $(f,h)$ as in \ref{asm:a1}--\ref{asm:a3},
$z_{0}\in\dom h$, $m_{0}\in\r_{++}$, $M_{0}\in[m_{0},\infty)$,
$\varepsilon\in\r_{++}$;}{\small\par}

\StateO \texttt{\small{}Hyper-parameters:}{\small{} $\rho\in(0,1)$,
$\theta\in(2,\infty)$, $\alpha\in(1,\infty)$, $\beta\in(1,\infty)$;}{\small\par}

\StateO \texttt{\small{}Outputs}{\small{}: $(z_{k+1},v_{k+1})\in\dom h\times\rn$;}{\small\par}

\For{$k\gets0,1,\ldots$}

\texttt{\textcolor{teal}{\small{}\hspace{-0.5em}$\triangleright$
Step 1:$\ $Line search for $m_{k+1}$ and proximal descent step.}}{\small\par}

\State $\hat{m}\gets\begin{cases}
m_{k}/\alpha, & \text{if }k\geq1\text{ and }m_{k}<\cdots<m_{0},\\
m_{k}, & \text{otherwise}
\end{cases}$

\State \textbf{call} \prettyref{alg:pf_apd_line_search} with data
\vspace{-0.75em}
\begin{equation}
\psi^{s}(\cdot)=\frac{f(\cdot)}{2\hat{m}}+\frac{1}{2}\|\cdot-z_{k}\|^{2},\quad\psi^{n}(\cdot)=\frac{h(\cdot)}{2\hat{m}},\label{eq:psi_apd_def}
\end{equation}
\vspace{-1.25em}

\StateO \quad{}$(f,h)$, $\hat{z}\equiv z_{k}$, $\hat{m}\equiv\hat{m}$,
$\hat{M}\equiv M_{k}$, and hyper-parameters $\rho$, $\theta$, $\alpha$,
$\beta$ 

\StateO \quad{}to obtain $(z_{k+1},u_{k+1},m_{k+1},M_{k+1})$

\texttt{\textcolor{teal}{\small{}\hspace{-0.5em}$\triangleright$
Step 2:$\ $Stationarity termination check.}}{\small\par}

\State $v_{k+1}\gets2m_{k+1}(u_{k+1}+z_{k}-z_{k+1})$

\If{$\|v_{k+1}\|\leq\varepsilon$}

\State \textbf{return }$(z_{k+1},\:v_{k+1})$

\EndIf

\EndFor % (k >= 0)

\end{algorithmic}
\end{algorithm}

We next present three important properties about \prettyref{alg:pf_apd}
and its iterates. As its proof is mostly technical, we move it to
\prettyref{subsec:prf_apd_tech_props}. Moreover, to ensure that the
resulting properties account for the possible asymmetry in \eqref{eq:intro_curv},
we make use of the scalars
\begin{equation}
\begin{aligned}m_{*} & :=\argmin_{z,z'\in\dom h,\ t\geq0}\left\{ t:f(z)-\ell_{f}(z;z')\geq-\frac{t}{2}\|z-z\|^{2}\right\} ,\\
M_{*} & :=\argmin_{z,z'\in\dom h,\ t\geq0}\left\{ t:f(z)-\ell_{f}(z;z')\leq\frac{t}{2}\|z-z\|^{2}\right\} ,
\end{aligned}
\label{eq:mM_star_defs}
\end{equation}
which are the values of a curvature pair of $f$.
\begin{proposition}
\label{prop:apd_tech_props}Define the scalars
\begin{equation}
\begin{gathered}\overline{m}:=\max\{m_{0},(\alpha+\beta)m_{*}\},\quad\overline{M}:=\beta\left[\max\{M_{0},M_{*}\}+2\overline{m}\right],\\
\overline{{\cal L}}_{0}:=\frac{\overline{M}}{2m_{0}}+1,\quad P_{0}:=\log^{1+}\left\{ \overline{{\cal L}}_{0}{\cal A}_{\frac{1}{2},\overline{{\cal L}}_{0}}\left(\rho,\theta\right)\right\} ,
\end{gathered}
\label{eq:apd_scalars}
\end{equation}
where $(m_{*},M_{*})$ and ${\cal A}_{\mu,\bar{L}}(\cdot,\cdot)$
are as in \eqref{eq:mM_star_defs} and \eqref{eq:calA_def}, respectively.
Then, for every $k\geq0$, the following statements hold about \prettyref{alg:pf_apd}
and its iterates:
\end{proposition}

\begin{itemize}
\item[(a)] $M_{k}\leq M_{k+1}\leq\overline{M}<\infty$ and $\{1/m_{k}\}$ is
bitonic\footnote{A sequence $\{a_{k}\}_{k=0}^{n}$ is \emph{bitonic} if there exists
$0\leq j\leq n$ such that $a_{0}\leq\cdots\leq a_{j}\geq\cdots\geq a_{n}$.
Note that monotone sequences are bitonic as well.} and bounded below by $1/\overline{m}$;
\item[(b)] its $(k+1)$-th outer iteration performs at most $T_{k+1}$ inner
iterations, where 
\begin{equation}
T_{k+1}\leq20\left(1+\log_{\alpha}\frac{m_{k+1}}{m_{k}}+\frac{1}{\sqrt{\alpha}-1}\sqrt{\frac{\overline{M}}{2m_{k}}}\right)P_{0};\label{eq:Tk_def}
\end{equation}
\item[(c)] it performs a finite number of outer iterations $K(\varepsilon)$,
where
\begin{equation}
K(\varepsilon)\leq1+\sum_{k=0}^{K(\varepsilon)-2}\frac{\overline{m}}{m_{k+1}}<1+\frac{2\theta\Delta_{0}\overline{m}}{\varepsilon^{2}};\label{eq:K_rho_gen_bds}
\end{equation}
\item[(d)] if, in addition, $f$ is convex and $\rho^{2}\alpha<1$, then $m_{k}=\alpha^{-k}m_{0}$
for every $k\geq0$ and $K(\varepsilon)$ in \eqref{eq:K_rho_gen_bds}
also satisfies 
\begin{equation}
K(\varepsilon)\leq1+\log_{\alpha}\left[1+\frac{4\theta\alpha^{-1}m_{0}^{2}\cdot R_{2m_{0}/\alpha}(z_{0})}{\varepsilon^{2}}\right],\label{eq:K_rho_spec_bds}
\end{equation}
where $R_{\tau}(\cdot)$ is as in \eqref{eq:R_phi_def};
\item[(e)] $v_{k+1}\in\nabla f(z_{k+1})+\pt h(z_{k+1})$ and its final iterate
$(\bar{z},\bar{v})=(z_{k+1},v_{k+1})$ solves Problem~\textbf{\emph{$\boldsymbol{{\cal CO}}$}}\emph{.}
\end{itemize}
We are now ready to give some important iteration complexity bounds
on \prettyref{alg:pf_apd}.
\begin{theorem}
\label{thm:apd_main_compl}Define $Q_{0}:=20P_{0}\cdot\left[1+\log_{\alpha}(\overline{m}/m_{0})\right],$where
$\overline{m}$ and $P_{0}$ are as in \eqref{eq:apd_scalars}, respectively.
Then, \prettyref{alg:pf_apd} stops and outputs a pair $(\bar{z},\bar{v})=(z_{k+1},v_{k+1})$
solving Problem~\textbf{\emph{$\boldsymbol{{\cal CO}}$}} in \textbf{$\overline{T}$}
inner iterations, where
\begin{equation}
\overline{T}\leq Q_{0}+P_{0}\left(\frac{20}{\sqrt{\alpha}-1}\right)\sqrt{\overline{M}\left[1+\frac{2\theta\Delta_{0}\overline{m}}{\varepsilon^{2}}\right]\left[\frac{1}{m_{0}}+\frac{2\theta\Delta_{0}}{\varepsilon^{2}}\right]},\label{eq:apd_gen_compl}
\end{equation}
and $\Delta_{0}$ is as in \eqref{eq:optimality_residuals}. Moreover,
if $f$ is convex and $\rho^{2}\alpha<1$, then
\begin{equation}
\overline{T}\leq Q_{0}+P_{0}\left[\frac{20\alpha}{(\sqrt{\alpha}-1)^{2}}\right]\sqrt{\overline{M}\left[\frac{1}{m_{0}}+\frac{\theta\min\{\Delta_{0},m_{0}d_{0}^{2}/\alpha\}}{\varepsilon^{2}}\right]},\label{eq:apd_spec_compl}
\end{equation}
where $d_{0}$ is as in \eqref{eq:optimality_residuals}.
\end{theorem}

\begin{proof}
The fact that \prettyref{alg:pf_apd} stops in a finite number of
inner iterations with a pair solving Problem~${\cal CO}$ is immediate
from \prettyref{prop:apd_tech_props}. Furthermore, the previous proposition
also implies that the total number of inner iterations in a single
call of \prettyref{alg:pf_apd} is at most 
\begin{align}
\sum_{k=0}^{K(\varepsilon)-1}T_{k+1} & \leq20P_{0}\sum_{k=0}^{K(\varepsilon)-1}\left(1+\log_{\alpha}\frac{m_{k+1}}{m_{k}}+\frac{1}{\sqrt{\alpha}-1}\sqrt{\frac{\overline{M}}{2m_{k}}}\right)\nonumber \\
 & \leq20P_{0}\left(1+\log_{\alpha}\frac{m_{K(\varepsilon)+1}}{m_{0}}+\frac{\sqrt{\overline{M}}}{\sqrt{\alpha}-1}\sum_{k=0}^{K(\varepsilon)-1}\frac{1}{\sqrt{m_{k}}}\right)\nonumber \\
 & \leq Q_{0}+\frac{20P_{0}\sqrt{\overline{M}}}{\sqrt{\alpha}-1}\sum_{k=0}^{K(\varepsilon)-1}\frac{1}{\sqrt{m_{k}}},\label{eq:Tk_bd}
\end{align}
where $T_{k+1}$ and $K(\varepsilon)$ are as in \eqref{eq:Tk_def}
and \eqref{eq:K_rho_gen_bds}, respectively. Let us now bound the
sum $\sum_{k=0}^{K(\varepsilon)-1}m_{k}^{-1/2}$. Using \prettyref{prop:apd_tech_props}(c)
and the fact that $\|z\|_{1}\leq\sqrt{n}\|z\|_{2}$ for any $z\in\rn$,
we first have 
\[
\sum_{k=0}^{K(\varepsilon)-1}\frac{1}{\sqrt{m_{k}}}\leq\left[K(\varepsilon)\sum_{k=0}^{K(\varepsilon)-1}\frac{1}{m_{k}}\right]^{1/2}\leq\sqrt{\left(1+\frac{2\theta\Delta_{0}\overline{m}}{\varepsilon^{2}}\right)\left(\frac{1}{m_{0}}+\frac{2\theta\Delta_{0}}{\varepsilon^{2}}\right)}.
\]
Using \eqref{eq:Tk_bd} and the above bound yields \eqref{eq:apd_gen_compl}. 

Now, let ${\cal R}_{0}:=R_{2m_{0}/\alpha}(z_{0})$ and suppose $f$
is convex. Using \prettyref{prop:apd_tech_props}(d), \eqref{eq:R_bd}
with $m_{1}=m_{0}/\alpha$, and the inequality $\sqrt{a+b}\leq\sqrt{a}+\sqrt{b}$
for $a,b\in\r$, we have

\begin{align*}
\sum_{k=0}^{K(\varepsilon)-1}\frac{1}{\sqrt{m_{k}}} & =\sum_{k=0}^{K(\varepsilon)-1}\sqrt{\frac{\alpha^{k}}{m_{0}}}\leq\frac{\alpha^{K(\varepsilon)/2}}{\sqrt{m_{0}}(\sqrt{\alpha}-1)}\overset{(d)}{\leq}\frac{\alpha}{\sqrt{m_{0}}(\sqrt{\alpha}-1)}\sqrt{1+\frac{4\theta\alpha^{-1}m_{0}^{2}{\cal R}_{0}}{\varepsilon^{2}}}\\
 & \leq\frac{\alpha}{\sqrt{m_{0}}(\sqrt{\alpha}-1)}\sqrt{1+\frac{\theta m_{0}\min\{\Delta_{0},m_{0}d_{0}^{2}/\alpha\}}{\varepsilon^{2}}}\\
 & =\frac{\alpha}{\sqrt{\alpha}-1}\sqrt{\frac{1}{m_{0}}+\frac{\theta\min\{\Delta_{0},m_{0}d_{0}^{2}/\alpha\}}{\varepsilon^{2}}.}
\end{align*}
Combining \eqref{eq:Tk_bd} and the above bound yields \eqref{eq:apd_spec_compl}. 
\end{proof}
Some remarks are in order. We first remark on \prettyref{alg:pf_apd_line_search}:
\begin{enumerate}[left=1em]
\item In view of assumption \ref{asm:a2} and \prettyref{prop:acg_alg_props},
the number of iterations in its $k$-th call is bounded above by $1+\log_{\alpha}(m_{k+1}/m_{k})$.
\item The checks in its Step~2 correspond to \eqref{eq:gd_ineq1} and \eqref{eq:gd_ineq2},
respectively.
\item If the $\ell$-th call to \prettyref{alg:pf_acg} ends with a ``bad
termination'', i.e., Step~2 in \prettyref{alg:pf_acg}, then \eqref{eq:gd_spec_ineq}
does not hold, the estimate $m$ is increased by a factor of $\alpha$,
and the algorithm proceeds to the $(\ell+1)$-th iteration.
\end{enumerate}
We now remark on \prettyref{alg:pf_apd} and its associated results:
\begin{enumerate}[left=1em, resume]
\item It is shown in \prettyref{prop:apd_tech_props} that (i) $v_{j+1}$
is a stationarity residual for the iterate $z_{j+1}$ and (ii) $\{M_{k}\}_{k\geq0}$
and $\{m_{k}\}_{k\geq0}$ are nondecreasing and nonnegative.
\item $Q_{0}$ in \eqref{eq:apd_gen_compl}--\eqref{eq:apd_spec_compl}
bounds the total number of inner iterations performed by unsuccessful
calls to \prettyref{alg:pf_acg}, i.e., those that stop in Step~2
of \prettyref{alg:pf_acg}.
\item While $m_{0}$ and $M_{0}$ are free parameters, a good initial value\footnote{This is motivated by the fact that $m_{0}$ and $M_{0}$ are bounded
by the Lipschitz constant of $\nabla f$.} for them is an estimate of the local Lipschitz constant $\tilde{L}_{0}$
of $\nabla f$ at $z_{0}$. Similar to the approach in \cite{nesterov1983},
one can estimate $\tilde{L}_{0}$ by sampling some $\hat{z}\in\dom h$
with $\hat{z}\neq z_{0}$ and choosing $\tilde{L}_{0}=\|\nabla f(z_{0})-\nabla f(\hat{z})\|/\|z_{0}-\hat{z}\|$.
\end{enumerate}
Before ending the section, we discuss how different choices of $m_{0}$
affect the complexities in \eqref{eq:apd_gen_compl} and \eqref{eq:apd_spec_compl}
when $m_{*}\leq M_{*}$:
\begin{enumerate}[left=1em, resume]
\item In the general case, choosing $m_{0}=1$ implies that the bound in
\eqref{eq:apd_gen_compl} (resp. \eqref{eq:apd_spec_compl}) is ${\cal O}(\sqrt{M_{*}m_{*}}\Delta_{0}/\varepsilon^{2})$
(resp. ${\cal O}(\sqrt{M_{*}\Delta_{0}}/\varepsilon)$) which matches
the complexity of the AIPP in \cite{kong2019complexity} and is optimal\footnote{See \cite[Theorem 4.7]{zhou2019lower}.}
for finding stationary points of \eqref{sec:intro} in the weakly-convex
(resp. convex) setting in terms of of $m_{*}$, $M_{*}$, $\Delta_{0}$,
and $\varepsilon$. 
\item If $d_{0}$ is known, then choosing $m_{0}=\varepsilon/d_{0}$ implies
\eqref{eq:apd_spec_compl} is ${\cal O}(\sqrt{M_{*}d_{0}}\log\varepsilon^{-1}/\sqrt{\varepsilon})$
which is optimal\footnote{See \cite[Section 2.2.2]{nesterov2018lectures} or \cite[Theorem 1]{carmon2021lower}.},
up to logarithmic terms, for finding stationary points of \eqref{sec:intro}
in the convex setting in terms of of $M_{*}$, $d_{0}$, and $\varepsilon$.
\end{enumerate}

\section{Technical Proofs}

\label{sec:tech_proofs}

This section gives the proofs of several technical results in \prettyref{sec:pf_algs}.
More specifically, it presents the proofs of \prettyref{lem:acg_tech_props}
and \prettyref{prop:apd_tech_props}.

\subsection{Proof of \prettyref{lem:acg_tech_props}}

\label{subsec:prf_acg_tech_props}

To avoid repetition, we let 
\begin{equation}
\{(A_{j},\tilde{x}_{j},y_{j},x_{j},L_{j})\}_{j\geq0}\label{eq:acg_iters}
\end{equation}
denote the sequence of iterates generated by a single call to \prettyref{alg:pf_acg}
and define
\[
\begin{gathered}a_{i}:=A_{i+1}-A_{i},\quad\xi_{i}:=1+\mu A_{i},\\
\tilde{q}_{i+1}(\cdot):=\ell_{\psi^{s}}(\cdot;\tilde{x}_{i})+\psi^{n}(\cdot)+\frac{\mu}{2}\|\cdot-\tilde{x}_{i}\|^{2},\\
q_{i+1}(\cdot):=\tilde{q}_{i+1}(y_{i+1})+L_{i+1}\left\langle \tilde{x}_{i}-y_{i+1},\cdot-y_{i+1}\right\rangle +\frac{\mu}{2}\|\cdot-y_{i+1}\|^{2},
\end{gathered}
\]
for every $i\geq0$. Recall also that each iterate in \eqref{eq:acg_iters}
is obtained in a finite number of iterations of \prettyref{alg:pf_acg_line_search}
in view of \eqref{eq:descent_acg} and \eqref{eq:acg_curv_cond}.

We first present some basic technical properties about $\tilde{q}$
and $q$.
\begin{lemma}
\label{lem:acg_q_props}If $\psi^{s}$ is $\mu$-strongly convex,
then, for every $j\geq0$,
\begin{itemize}
\item[(a)] $\tilde{q}_{j+1}(y_{j+1})=q_{j+1}(y_{j+1})$ and $\tilde{q}_{j+1}(\cdot)\leq q_{j+1}(\cdot)\leq\psi(\cdot)$;
\item[(b)] $y_{j+1}=\min_{x\in\rn}\left\{ q_{j+1}(x)+L_{j+1}\|x-\tilde{x}_{j+1}\|^{2}/2\right\} $;
\item[(c)] $x_{j+1}=\argmin_{x\in\rn}\left\{ a_{j}q_{j+1}(x)+\xi_{j+1}\|x-x_{j}\|^{2}/2\right\} $.
\end{itemize}
\end{lemma}

\begin{proof}
(a) See \cite[Lemma B.0.1]{kong2021accelerated}.

(b) Let $\Psi(\cdot)=q_{j+1}(\cdot)+L_{j+1}\|\cdot-\tilde{x}_{j+1}\|^{2}/2$.
It follows from the definition of $q_{j+1}$ that $\nabla\Psi(y_{j+1})=0$
and, hence, $y_{j+1}$ satisfies the optimality condition of the given
inclusion.

(c) Using the definition of $q_{j+1}$, the given optimality condition
of $x_{j+1}$ holds if and only if
\[
x_{j+1}=x_{j}-\frac{a_{j}\nabla q_{j+1}(x_{j})}{\xi_{j+1}}=x_{j}+\frac{a_{j}\left[L(y_{j+1}-\tilde{x}_{j})+\mu(y_{j+1}-x_{j})\right]}{1+\mu A_{j+1}}
\]
which is equivalent to the update for $x_{j+1}$ in \prettyref{alg:pf_acg}
(given by \prettyref{alg:pf_acg_line_search}).
\end{proof}
The next result presents an important technical bound on the residual
$\|y_{j+1}-\tilde{x}_{j}\|^{2}$.
\begin{lemma}
\label{lem:main_acg_bd}If $\psi^{s}$ is $\mu$-strongly convex,
then, for every $j\geq0$ and $y\in\rn$, 
\begin{align}
 & \frac{\mu A_{j+1}}{2}\|y_{j+1}-\tilde{x}_{j}\|^{2}+A_{j+1}\psi(y_{j+1})+\frac{\xi_{j+1}}{2}\|y-x_{j+1}\|^{2}\label{eq:main_acg_bd}\\
 & \leq A_{j}q_{j+1}(y_{j})+a_{j}q_{j+1}(y)+\frac{\xi_{j}}{2}\|y-x_{j}\|^{2}.\nonumber 
\end{align}
\end{lemma}

\begin{proof}
Let $y\in\rn$ be fixed. We first derive two auxiliary technical inequalities.
For the first one, we use the fact that $a_{j}q_{j+1}+\xi_{j}\|\cdot-x_{j}\|^{2}/2$
is $\xi_{j+1}$-strongly convex, the definition of $\xi_{j+1}$, and
the optimality of $x_{j+1}$ in \prettyref{lem:acg_q_props}(c) to
obtain 
\begin{equation}
a_{j}q_{j+1}(y)+\frac{\xi_{j}}{2}\|y-x_{j}\|^{2}-\frac{\xi_{j+1}}{2}\|y-x_{j+1}\|^{2}\geq a_{j}q_{j+1}(x_{j+1})+\frac{\xi_{j}}{2}\|x_{j+1}-x_{j}\|^{2}.\label{eq:xjp1_opt1}
\end{equation}
For the second one, let $r_{j+1}:=(A_{j}y_{j}+a_{j}x_{j+1})/A_{j+1}$.
Using the convexity of $q_{j+1}$, the updates in \prettyref{alg:pf_acg_line_search}
and \prettyref{alg:pf_acg}, and \prettyref{lem:acg_q_props}(a)--(b),
we obtain
\begin{align}
 & A_{j}q_{j+1}(y_{j})+a_{j}q_{j+1}(x_{j+1})+\frac{\xi_{j}}{2}\|x_{j+1}-x_{j}\|^{2}\nonumber \\
 & \geq A_{j+1}\left[q_{j+1}(r_{j+1})+\frac{\xi_{j}}{2a_{j}^{2}}\left\Vert r_{j+1}-\frac{A_{j}y_{j}+a_{j}x_{j}}{A_{j+1}}\right\Vert ^{2}\right]\nonumber \\
 & =A_{j+1}\left[q_{j+1}(r_{j+1})+\frac{L_{j+1}}{2}\left\Vert r_{j+1}-\tilde{x}_{j}\right\Vert ^{2}\right]\geq A_{j+1}\min_{x\in\rn}\left\{ q_{j+1}(x)+\frac{L_{j+1}}{2}\|x-\tilde{x}_{j}\|^{2}\right\} \nonumber \\
 & \overset{\text{\prettyref{lem:acg_q_props}(a)-(b)}}{=}A_{j+1}\left[\tilde{q}_{j+1}(y_{j+1})+\frac{L_{j+1}}{2}\|y_{j+1}-\tilde{x}_{j}\|^{2}\right]\label{eq:xjp1_opt2}
\end{align}
Combining \eqref{eq:xjp1_opt1}, \eqref{eq:xjp1_opt2}, and \eqref{eq:acg_curv_cond}
with $L=L_{j+1}$, we conclude that
\begin{align*}
 & A_{j}q_{j+1}(y_{j})+a_{j}q_{j+1}(y)+\frac{\xi_{j}}{2}\|y-x_{j}\|^{2}-\frac{\xi_{j+1}}{2}\|y-x_{j+1}\|^{2}\\
 & \overset{\eqref{eq:xjp1_opt1}}{\geq}A_{j}q_{j+1}(y_{j})+a_{j}q_{j+1}(x_{j+1})+\frac{\xi_{j}}{2}\|x_{j+1}-x_{j}\|^{2}\\
 & \overset{\eqref{eq:xjp1_opt2}}{\geq}\tilde{q}_{j+1}(y_{j+1})+\frac{L_{j+1}}{2}\|y_{j+1}-\tilde{x}_{j}\|^{2}\overset{\eqref{eq:acg_curv_cond}}{\geq}\psi(y_{j+1})+\frac{\mu}{2}\|y_{j+1}-\tilde{x}_{j}\|^{2}.
\end{align*}
\end{proof}
The following result further refines the previous bound on $\|y_{j+1}-\tilde{x}_{j}\|^{2}$.
\begin{lemma}
\label{lem:acg_resid_bd}If $\psi^{s}$ is $\mu$-strongly convex,
then, for every $j\geq0$,
\begin{equation}
\mu A_{j+1}\|y_{j+1}-\tilde{x}_{j}\|^{2}\leq\|y_{j+1}-y_{0}\|^{2}-\xi_{j+1}\|y_{j+1}-x_{j+1}\|^{2}.\label{eq:acg_resid_bd}
\end{equation}
\end{lemma}

\begin{proof}
Let $j\geq0$ be fixed and suppose $\psi^{s}$ is $\mu$-strongly
convex. Moreover, define 
\[
\Psi_{i}:=A_{i}\left[\psi(y_{i})-\psi(y_{j})\right]+\frac{\xi_{i}}{2}\|y_{j}-x_{i}\|^{2}\quad\forall i\geq0.
\]
Using \prettyref{lem:main_acg_bd} with $y=y_{j}$, \prettyref{lem:acg_q_props}(a),
the fact that $a_{j}=A_{j+1}-A_{j}$, and the definition of $\Psi_{i}$
above, we have that for every $i\geq0$,
\begin{align*}
 & \frac{\mu A_{i+1}}{2}\|y_{i+1}-\tilde{x}_{i}\|^{2}\\
 & \overset{\eqref{eq:main_acg_bd}}{\leq}A_{i}q_{i+1}(y_{i})+a_{i}q_{i+1}(y_{j})+\frac{\xi_{i}}{2}\|y_{i}-x_{i}\|^{2}-\Psi_{i+1}-A_{i+1}\psi(y_{j})\\
 & \overset{\prettyref{lem:acg_q_props}\text{(a)}}{\leq}A_{i}\psi(y_{i})+a_{i}\psi(y_{j})+\frac{\xi_{i}}{2}\|y_{i}-x_{i}\|^{2}-\Psi_{i+1}-A_{i+1}\psi(y_{j})\\
 & =\Psi_{i}-\Psi_{i+1}.
\end{align*}
Summing the above inequality from $i=0$ to $j$ and using the fact
that $A_{i+1}\geq0$ for every $i$ and $(x_{0},A_{0},\xi_{0})=(y_{0},0,1)$,
we conclude that 
\begin{align*}
 & \frac{\mu A_{j+1}}{2}\|y_{j+1}-\tilde{x}_{j}\|^{2}\leq\sum_{i=0}^{j}\frac{\mu A_{i+1}}{2}\|y_{i+1}-\tilde{x}_{i}\|^{2}\leq\Psi_{0}-\Psi_{j+1}\\
 & =\frac{\xi_{0}}{2}\|y_{j}-x_{0}\|^{2}-\frac{\xi_{j}}{2}\|y_{j+1}-x_{j+1}\|^{2}=\frac{1}{2}\|y_{j}-y_{0}\|^{2}-\frac{\xi_{j}}{2}\|y_{j+1}-x_{j+1}\|^{2}.
\end{align*}
\end{proof}
We are now ready to prove \prettyref{lem:acg_tech_props}.
\begin{proof}[Proof of \prettyref{lem:acg_tech_props}]
(a) See \cite[Lemma B.0.2]{kong2021accelerated} for the bound on
$A_{j+1}$. The bound on $L_{j}$ follows from how \prettyref{alg:pf_acg_line_search}
is called in \prettyref{alg:pf_acg}, the update rule for $L$ in
\prettyref{alg:pf_acg_line_search}, and \eqref{eq:descent_acg} which
follows from assumption \ref{asm:b2}.

(b) Using the optimality of $y_{j+1}$ given by \prettyref{alg:pf_acg_line_search}
and \prettyref{alg:pf_acg} and the definition of $r_{j+1}$, it follows
that 
\[
0\in\nabla\psi^{s}(\tilde{x}_{j})+\partial\psi^{n}(y_{j+1})+(L_{j+1}+\mu)(y_{j+1}-\tilde{x}_{j})=\nabla\psi^{s}(y_{j+1})+\partial\psi^{n}(y_{j+1})-r_{j+1}.
\]

(c) The first bound in \eqref{eq:acg_curv_cond} is an immediate consequence
of \prettyref{lem:acg_resid_bd}. For the second bound in \eqref{eq:acg_curv_cond},
note that part (b) and the assumption that $\psi^{s}$ implies that
$r_{j+1}\in\pt\psi(y_{j+1})$. The conclusion now follows from the
previous inclusion and the definition of the subdifferential.

(d) Suppose $A_{j+1}\geq{\cal A}_{\mu,\bar{L}}:={\cal A}_{\mu,\bar{L}}(\sigma,\theta)$
and \eqref{eq:acg_curv_cond} holds. We separate this proof into two
parts. We first prove the bound in \eqref{eq:acg_curv_cond}. Using
the definitions of $r_{j+1}$ and $\bar{L}$, part (c), the fact that
$\mu\leq L_{0}\leq L_{j+1}$, assumption \ref{asm:b2}, and the relation
$(a+b)^{2}\leq2a^{2}+2b^{2}$ for $a,b\in\r$, we have that 
\begin{align*}
\|r_{j+1}\|^{2} & =\|\nabla\psi^{s}(y_{j+1})-\nabla\psi^{s}(\tilde{x}_{j})+(L_{j+1}+\mu)(\tilde{x}_{j}-y_{j+1})\|^{2}\\
 & \leq2\|\nabla\psi^{s}(y_{j+1})-\nabla\psi^{s}(\tilde{x}_{j})\|^{2}+2(L_{j+1}+\mu)^{2}\|\tilde{x}_{j}-y_{j+1}\|^{2}\\
 & \leq2[L_{*}^{2}+(L_{j+1}+\mu)^{2}]\|\tilde{x}_{j}-y_{j+1}\|\leq16\bar{L}^{2}\|\tilde{x}_{j}-y_{j+1}\|^{2}\\
 & \overset{\eqref{eq:acg_resid_bd}}{\leq}\frac{16\bar{L}}{\mu A_{j+1}}\|y_{j+1}-y_{0}\|^{2}.
\end{align*}
It follows from the above bound and the definition of ${\cal A}_{\mu,\bar{L}}$
that 
\[
\|r_{j+1}\|^{2}\leq\frac{16\bar{L}}{\mu A_{j+1}}\|y_{j+1}-y_{0}\|^{2}\leq\frac{16\bar{L}^{2}}{\mu{\cal A}_{\mu,\bar{L}}}\|y_{j+1}-y_{0}\|^{2}\leq\sigma^{2}\|y_{j+1}-y_{0}\|^{2}
\]
and, hence, the first condition of \eqref{eq:acg_termination} holds. 

To show the second condition of \eqref{eq:acg_termination}, let $\gamma:=\sqrt{(2-\theta)/\theta}$.
Using the fact that $\gamma\in(0,1)$, \eqref{eq:acg_curv_cond},
$\mu\leq L_{j+1}$, and the bound 
\[
\|a+b\|^{2}\leq(1+\gamma)\|a\|^{2}+(1+\gamma^{-1})\|b\|^{2}\quad\forall a,b\in\r^{n},
\]
we then have that
\begin{align*}
 & \|r_{j+1}\|^{2}\overset{\eqref{eq:acg_curv_cond}}{\leq}\frac{L^{2}}{\mu A_{j+1}}\|y_{j+1}-y_{0}\|^{2}\leq\frac{4(\mu+L_{j+1})^{2}}{\mu A_{j+1}}\|y_{j+1}-y_{0}\|^{2}\\
 & \leq\frac{16\bar{L}^{2}}{\mu{\cal A}_{\mu,\bar{L}}(\sigma,\theta)}\|y_{j+1}-y_{0}\|^{2}\leq\frac{\gamma^{2}}{4}\|y_{j+1}-y_{0}\|^{2}\overset{\gamma\in(0,1)}{\leq}\left(\frac{\gamma}{1+\gamma}\right)^{2}\|y_{j+1}-y_{0}\|^{2}\\
 & \leq\left(\frac{\gamma}{1+\gamma}\right)^{2}\left(1+\gamma\right)\|r_{j+1}+y_{j+1}-y_{0}\|^{2}+\left(\frac{\gamma}{1+\gamma}\right)^{2}\left(1+\frac{1}{\gamma}\right)\|r_{j+1}\|^{2}\\
 & =\frac{\gamma^{2}}{1+\gamma}\|r_{j+1}+y_{j+1}-y_{0}\|^{2}+\frac{\gamma}{1+\gamma}\|r_{j+1}\|^{2},
\end{align*}
which implies $\|r_{j+1}\|^{2}\leq\gamma^{2}\|r_{j+1}+y_{j+1}-y_{0}\|^{2}$.
It then follows from the second bound in \eqref{eq:acg_curv_cond}
and the previous inequality that 
\begin{align*}
2\left[\psi(y_{0})-\psi(y_{j+1})\right] & \overset{\eqref{eq:acg_cvx_cond}}{\geq}2\left\langle r_{j+1},y_{0}-y_{j+1}\right\rangle \\
 & =\|r_{j+1}+y_{0}-y_{j+1}\|^{2}-\|r_{j+1}\|^{2}-\|y_{0}-y_{j+1}\|^{2}\\
 & \geq(1-\gamma^{2})\|r_{j+1}+y_{0}-y_{j+1}\|^{2}-\|y_{0}-y_{j+1}\|^{2}\\
 & =\frac{2}{\theta}\|r_{j+1}+y_{0}-y_{j+1}\|^{2}-\|y_{0}-y_{j+1}\|^{2}.
\end{align*}
\end{proof}

\subsection{Proof of \prettyref{prop:apd_tech_props}}

\label{subsec:prf_apd_tech_props}
\begin{proof}[Proof of \prettyref{prop:apd_tech_props}]
 (a) Note that the $k$-th successful call of \prettyref{alg:pf_acg}
is such that its input $\psi^{s}$ has the curvature pair 
\begin{equation}
(L_{k+1}^{-},L_{k+1}^{+}):=\left(\max\left\{ 0,\frac{m_{*}}{2m_{k+1}}-1\right\} ,\frac{M_{*}}{2m_{k+1}}+1\right).\label{eq:curv_psis}
\end{equation}
Hence, it follows from Step~1 of \prettyref{alg:pf_apd_line_search},
\prettyref{prop:acg_alg_props}(b) with $\mu=1/2$, and the definition
of $\overline{m}$ imply that the last call of \prettyref{alg:pf_acg}
at the $k$-th iteration of \prettyref{alg:pf_apd} obtains $m_{k+1}$
being at most $\alpha m_{k}\leq\overline{m}$. Consequently, $\{1/m_{k}\}$
(resp. $\{m_{k}\}$) is bounded below by $1/\overline{m}$ (resp.
bounded above by $\overline{m}$). The fact that $\{1/m_{j}\}$ is
bitonic follows from the the definition of $\hat{m}$ in Step 1 of
\prettyref{alg:pf_apd}, the call to \prettyref{alg:pf_apd_line_search}
in of \prettyref{alg:pf_apd}, and the fact that in \prettyref{alg:pf_apd}
the returned scalar $m$ in is always lower bounded by the input $\hat{m}$.
To show the bound on $M_{k}$, note that the curvature pair of $\psi^{s}$
in \eqref{eq:curv_psis} implies that $\nabla\psi^{s}$ is $L_{*}$-Lipschitz
continuous where $L_{*}=\max\{L_{k+1}^{-},L_{k+1}^{+}\}$. It then
follows from the upper previous bound on $m_{k+1}$ and \prettyref{lem:acg_tech_props}(a)
that
\begin{align*}
\frac{M_{k}}{2m_{k+1}}+1 & \leq\frac{M_{k+1}}{2m_{k+1}}+1\leq\beta\left[\frac{\max\left\{ M_{0},M_{*}\right\} }{2m_{k+1}}+1\right]\\
 & \leq\frac{\beta\left[\max\{M_{0},M_{*}\}+2\overline{m}\right]}{2m_{k+1}}=\frac{\overline{M}}{2m_{k+1}},
\end{align*}
which immediately implies $M_{k+1}\geq M_{k}$ and $M_{k+1}\leq\overline{M}$.

(b) Let an outer iteration index $k\geq1$ be fixed and define 
\[
{\cal L}_{\ell}:=\frac{\overline{M}}{2m_{k}\alpha^{\ell}}+1,\quad{\cal I}_{\ell}:=\left\lceil 1+4\sqrt{{\cal L}_{\ell}}P_{0}\right\rceil ,\quad\bar{\ell}:=1+\log_{\alpha}(m_{k+1}/m_{k}),
\]
where $P_{0}$ is as in \eqref{eq:apd_scalars}. Using \prettyref{prop:acg_alg_props}(a)
with $(\mu,\sigma)=(1/2,\rho)$, part (a), the fact that $P_{0}\geq1$,
and assumptions \ref{asm:a1}--\ref{asm:a2}, it follows that the
number of inner iterations performed by \prettyref{alg:pf_apd} at
outer iteration $k$ is bounded above by 
\begin{align*}
\sum_{\ell=0}^{\bar{\ell}}{\cal I}_{\ell} & \leq2\sum_{\ell=0}^{\bar{\ell}}\left(1+4\sqrt{{\cal L}_{\ell}}P_{0}\right)\leq2\sum_{\ell=0}^{\bar{\ell}}\left(1+4\left[\sqrt{\frac{\overline{M}}{2m_{k}\alpha^{\ell}}}+1\right]P_{0}\right)\\
 & \leq10P_{0}\sum_{\ell=0}^{\bar{\ell}}\left(\sqrt{\frac{\overline{M}}{2m_{k}\alpha^{\ell}}}+1\right)=10\left[\bar{\ell}+\sqrt{\frac{\overline{M}}{2m_{k}}}\sum_{\ell=0}^{\bar{\ell}}\alpha^{-\ell/2}\right]P_{0}\\
 & =10\left[\bar{\ell}+\sqrt{\frac{\overline{M}}{2m_{k}}}\left(\frac{1-\alpha^{-\bar{\ell}/2}}{\sqrt{\alpha}-1}\right)\right]P_{0}\leq10\left[\bar{\ell}+\frac{1}{\sqrt{\alpha}-1}\sqrt{\frac{\overline{M}}{2m_{k}}}\right]P_{0}\\
 & \leq20\left[1+\log_{\alpha}\frac{m_{k+1}}{m_{k}}+\frac{1}{\sqrt{\alpha}-1}\sqrt{\frac{\overline{M}}{2m_{k}}}\right]P_{0}.
\end{align*}

(c) In view of \prettyref{prop:apd_tech_props}(a), let $\overline{K}$
be an index satisfying 
\[
\frac{\overline{K}-1}{\overline{m}}\leq\sum_{k=0}^{\overline{K}-2}\frac{1}{m_{k+1}}<\frac{2\theta\Delta_{0}}{\varepsilon^{2}}\leq\sum_{k=0}^{\overline{K}-1}\frac{1}{m_{k+1}}.
\]
Using \prettyref{lem:acg_specialization}, the choice of inputs to
\prettyref{alg:pf_acg}, \prettyref{lem:basic_gd_compl}(b), and the
last of the above inequalities, we have that
\begin{align*}
\inf_{0\leq k\leq\overline{K}-1}\|v_{j+1}\|^{2} & \leq\frac{2\theta\Delta_{0}}{\sum_{k=0}^{\overline{K}-1}m_{k+1}^{-1}}\leq\varepsilon^{2}.
\end{align*}
Hence, because of the termination condition in Step 2 of \prettyref{alg:pf_apd},
it follows that the number of outer iterations $K(\varepsilon)$ is
at most $\overline{K}$. Using the fact that $m_{k+1}>0$ for every
$k\geq0$, the bounds in \eqref{eq:K_rho_gen_bds} immediately follow. 

(d) Since $f$ is convex, $\psi^{s}$ in \eqref{eq:psi_apd_def} is
$(1/2)$-strongly convex at every (outer) iteration of \prettyref{alg:pf_apd}.
Consequently, using \prettyref{prop:acg_alg_props}(b) with $\mu=1/2$,
the inputs and outputs given to \prettyref{alg:pf_acg} by \prettyref{alg:pf_apd_line_search},
and the definition of $\psi^{s}$, it follows that every call to \prettyref{alg:pf_apd_line_search}
by \prettyref{alg:pf_apd} stops at (line search) iteration $\ell=0$,
i.e., the conditions in \eqref{eq:gd_spec_ineq}\textbf{ }are satisfied
when they are first checked. Using the update rule in Step~1 of \prettyref{alg:pf_apd}
and the previous conclusion, we have that $m_{k+1}=m_{k}/\alpha$
for every $k\geq0$. Inductively, it then follows that $m_{k}=\alpha^{-k}m_{0}$
for every $k\geq0$. We now prove the claimed complexity bound. In
view of the fact that $\{1/m_{k}\}$ is bounded below from part (a),
let $\overline{K}$ be the smallest index such that $\overline{K}\geq2$
and
\begin{equation}
\sum_{k=0}^{\overline{K}-2}\frac{1}{m_{k+1}}\leq\frac{1}{m_{0}}+\frac{4\theta\alpha^{-1}m_{0}\cdot R_{2m_{0}/\alpha}(z_{0})}{\varepsilon^{2}}\leq\sum_{k=0}^{\overline{K}-1}\frac{1}{m_{k+1}}.\label{eq:cvx_tech_bd}
\end{equation}
Using \eqref{eq:vj_ref_bd} with $\nu=\alpha$, the fact that $m_{1}=m_{0}/\alpha$,
and the same type of arguments as in part (c), we have that
\begin{align*}
\min_{1\leq k\leq\bar{K}-1}\|v_{k+1}\|^{2} & \leq\frac{4\theta m_{1}R_{2m_{1}}(z_{0})}{\sum_{k=1}^{\overline{K}-1}m_{k+1}^{-1}}=\frac{4\theta\alpha^{-1}m_{0}\cdot R_{2m_{0}/\alpha}(z_{0})}{-m_{0}^{-1}+\sum_{k=0}^{\overline{K}-1}m_{k+1}^{-1}}\leq\varepsilon^{2}
\end{align*}
and, hence, the number of outer iterations $K(\varepsilon)$ is bounded
above by $\overline{K}$. It now remains to show that $\overline{K}$
is bounded above by the expression on the right-hand side of \eqref{eq:K_rho_spec_bds}.
Using the identity $m_{k}=\alpha^{-k}m_{0}$ and the right-hand side
of \eqref{eq:cvx_tech_bd}, we have
\[
\frac{1}{m_{0}}+\frac{4\theta\alpha^{-1}m_{0}\cdot R_{2m_{0}/\alpha}(z_{0})}{\varepsilon^{2}}\geq\sum_{k=0}^{\overline{K}-2}\frac{1}{m_{k+1}}=\frac{1}{m_{0}}\sum_{k=0}^{\overline{K}-2}\alpha^{k+1}\geq\frac{\alpha^{\overline{K}-1}}{m_{0}}.
\]
Applying the function $\log_{\alpha}(\cdot)$ to both sides of the
above inequality and re-arranging terms yields the desired bound on
$\overline{K}$.

(e) Using the definition of $v_{k+1}$ and \prettyref{lem:acg_specialization}
with $\psi^{s}$ as in \eqref{eq:psi_apd_def}, we have 
\begin{align*}
v_{k+1} & \in2m_{k+1}\left[\nabla\psi^{s}(z_{k+1})+\pt\psi^{s}(z_{k+1})\right]+2m_{k+1}(z_{k}-z_{k+1})\\
 & =2m_{k+1}\left[\frac{\nabla f(z_{k+1})}{2m_{k+1}}+(z_{k+1}-z_{k})+\frac{\pt h(z_{k+1})}{m_{k+1}}\right]+2m_{k+1}(z_{k}-z_{k+1})\\
 & =\nabla f(z_{k+1})+\pt h(z_{k+1}).
\end{align*}
The fact that the last iterate solves Problem~${\cal CO}$ follows
from the above inclusion and the termination condition in Step~2
of \prettyref{alg:pf_apd}.
\end{proof}

\section{Applications}

\label{sec:extensions} 

This section describes a few possible applications of \prettyref{alg:pf_apd}
in more general optimization frameworks.\medskip{}

\emph{Min-Max Smoothing}. In \cite{kong2021minmax}, a smoothing framework
was proposed for finding $\varepsilon$-stationary points of the nonconvex-concave
min-max problem 
\begin{equation}
\min_{x\in\rn}\max_{y\in\r^{l}}\left[\phi(x,y)+h(x)\right]\label{eq:min_max}
\end{equation}
where $h$ is as in assumption \ref{asm:a1}, $\phi(\cdot,y)$ is
$m_{x}$-weakly convex and differentiable, $-\phi(x,\cdot)$ is proper
closed convex, and $\nabla_{x}\phi(\cdot,\cdot)$ is Lipschitz continuous. 

The framework considers finding an $\varepsilon$-stationary point
of $h$ plus a smooth approximation $\hat{p}$ of $\max_{y\in Y}\phi(\cdot,y)$.
Choosing a special smoothing constant such that the curvature pair
$(\hat{m},\hat{M})$ of $\hat{p}$ satisfies $\hat{m}=m_{x}$ and
$\hat{M}=\Theta(\varepsilon^{-1}D_{y})$ (resp. $\hat{M}=\Theta(D_{y}^{2}\varepsilon^{-2})$),
where $D_{y}$ is diameter of $\dom(-\phi(x,\cdot))$, it was shown
that an $\varepsilon$-stationary point of $\hat{p}$ yields an $\varepsilon$-primal-dual
(resp. directional) stationary point of \eqref{eq:min_max}.

If we use PF.APD with $m_{0}=\varepsilon$ to obtain an $\varepsilon$-stationary
point of $\hat{p}$ as above, then an $\varepsilon$-primal-dual (resp.
directional) stationary point of \eqref{eq:min_max} is obtained in
${\cal O}(\varepsilon^{-2.5})$ (resp. ${\cal O}(\varepsilon^{-3})$)
inner iterations, and this matches, up to logarithmic terms, the complexity
bounds for the smoothing method in \cite{kong2021minmax}. Moreover,
when $\phi(\cdot,y)$ is convex, the above complexity is ${\cal O}(\varepsilon^{-1})$
(resp. ${\cal O}(\varepsilon^{-1.5})$), and this appears to be the
first parameter-free approach that could be used for min-max optimization.
This approach also has the  strong advantage that it does not need
to know $D_{y}$.

\medskip{}

\emph{Penalty Method}. In \cite{kong2020efficient}, a penalty method
is proposed for finding $\varepsilon$-KKT points of the linearly-constrained
nonconvex optimization problem 
\begin{equation}
\min_{x\in\rn}\left\{ \phi(x):=f(x)+h(x):Ax=b\right\} \label{eq:lin-constr_prb}
\end{equation}
where $(f,h)$ are as in \ref{asm:a1}--\ref{asm:a3}. It was shown
that if the penalty method uses an algorithm ${\cal A}$ that needs
${\cal O}(T_{m,M}(\varepsilon))$ iterations to obtain an $\varepsilon$-stationary
point of $\phi$, then the total number of inner iterations of the
penalty method (for finding an $\varepsilon$-KKT point) is ${\cal O}(T_{m,\varepsilon^{-2}}(\varepsilon))$. 

If we use the PF.APD with $m_{0}=\varepsilon$ as algorithm ${\cal A}$
above, then an $\varepsilon$-KKT point of \eqref{eq:lin-constr_prb}
is obtained in ${\cal O}(\varepsilon^{-3})$ inner iterations which
matches the complexity bound for the particular penalty method in
\cite{kong2020efficient} (which uses the AIPP in \cite{kong2019complexity}
for algorithm ${\cal A}$). Moreover, when $f$ is convex, the above
complexity is ${\cal O}(\varepsilon^{-1.5})$. Like in the above discussion
for min-max smoothing, this appears to be the first parameter-free
approach used for linearly-constrained composite optimization. 

\section{Numerical Experiments}

\label{sec:numerical}

This section presents experiments that demonstrate the numerical efficiency
of PF.APD. Comments about the results are given in \prettyref{subsec:numer_comments}.

We first describe the benchmark algorithms, the implementation of
APD, and the computing environment. The benchmark algorithms are instances
of PGD, AIPP, ANCF, and UPF described in \prettyref{sec:intro} and
\prettyref{tab:compl_compare}. Specifically, AIPP uses $\sigma=1/4$,
ANCF uses $\theta=1.25$, and UPF uses $\gamma_{1}=\gamma_{2}=0.4$,
$\gamma_{3}=1$, $\beta_{0}=1$, and $\hat{\lam}_{0}=1$. Moreover,
UPF uses $\hat{\lam}_{k}$ for the initial estimate of $\hat{\lam}_{k+1}$
for $k\geq1$ and AIPP stops its call of ACG when the condition $\|u_{j}\|^{2}+2\eta_{j}\leq\sigma\|x_{0}-x_{j}+u_{j}\|^{2}$
holds (inside of ACG) instead of prescribing a fixed number of ACG
iterations. The implementations for ANCF and UPF were generously provided
by the respective authors of \cite{liang2021fista} and \cite{ghadimi2019generalized},
while the author implemented AIPP and PGD.\footnote{See \texttt{https://github.com/wwkong/nc\_opt/tree/master/tests/papers/apd}
for the source code of the experiments.} Note that we did not consider the VAR-FISTA method in \cite{sim2020fista}
because: (i) its steps were similar to ANCF and (ii) we already had
a readily available and optimized code for the ANCF method.

The implementation of PF.APD, abbreviated as APD, is as in \prettyref{alg:pf_apd}
with $\alpha=\beta=2$, $\rho=1/\sqrt{\alpha}$, $\hat{m}=m_{k}$
for every $k\geq1$, and the following additional updates at the beginning
of every call to \prettyref{alg:pf_acg} and the $(k+1)^{{\rm th}}$
iteration of \prettyref{alg:pf_apd}, respectively:
\begin{equation}
L_{0}\gets\frac{L_{0}}{1+\beta/2},\quad m_{k+1}\gets\max\left\{ m_{0},\frac{m_{k+1}}{1+\alpha/2}\right\} .\label{eq:numer_mod}
\end{equation}
This is done to allow a possible decrease in both of the curvature
estimates. While we do not show convergence of this modified PF.APD,
we believe that convergence can be established using similar techniques
as in \cite{nesterov2013gradient}. It is worth mentioning that the
modification in \eqref{eq:numer_mod} substantially improves upon
the numerical performance of PF.APD compared to the version given
in \prettyref{alg:pf_apd}.

All experiments were run in MATLAB 2023a under a 64-bit Windows 11
machine with an Intel Core i7-10700K processor and 16 GB of RAM. All
algorithms except AIPP use an initial curvature estimate of $(m_{0},M_{0})=(1,1)$,
and each algorithm stops when it finds a pair $(\bar{z},\bar{v})$
solving Problem~$\boldsymbol{{\cal CO}}$ for some $\varepsilon>0$.
A time limit of 1200 (resp. 2400) seconds was prescribed for the problems
in \prettyref{subsec:qsdp} and \ref{subsec:lrmc} (resp. \prettyref{subsec:svr}).
We also set an (innermost) iteration limit of 500000 (resp. 10000)
for \prettyref{subsec:svr} (resp. \prettyref{subsec:lrmc}).

\subsection{Quadratic Semidefinite Programming}

\label{subsec:qsdp}

The problem of interest is the 400-variable nonconvex quadratic semidefinite
programming (QSDP) problem
\begin{align}
\min_{Z\in\r^{35\times35}}\  & -\frac{\eta_{1}}{2}\|D{\cal B}(Z)\|_{2}^{2}+\frac{\eta_{2}}{2}\|{\cal A}(Z)-b\|_{2}^{2},\label{eq:qsdp}\\
\text{s.t.}\  & \text{\ensuremath{\trc}}(Z)=1,\quad Z\in{\cal S}_{+}^{35},\nonumber 
\end{align}
where ${\cal S}_{+}^{n}$ is the $n$-dimensional positive semidefinite
cone, $\trc(Z)$ is the trace of a matrix, $b\in\r^{10}$, $D\in\r^{10\times10}$
is a diagonal matrix with nonzero entries randomly generated from
$\{1,...,1000\}$, $(\eta_{1},\eta_{2})\in\r_{++}^{2}$ are chosen
to yield a particular curvature pair, and ${\cal A},{\cal B}:{\cal S}_{+}^{20}\mapsto\r^{10}$
are linear operators defined by 
\[
[{\cal A}(Z)]_{j}=A_{j}\bullet Z,\quad[{\cal B}(Z)]_{j}=B_{j}\bullet Z
\]
for matrices $\{A_{j}\}_{j=1}^{10},\{B_{j}\}_{j=1}^{10}\subseteq\r^{20\times20}$.
Moreover, the entries in these matrices and $b$ were sampled from
the uniform distribution on $[0,1]$. 

To build the decomposition in \eqref{eq:main_prb}, we set $f$ equal
to the objective function of \eqref{eq:qsdp}, $h$ equal to the indicator
function of the constraint set of \eqref{eq:qsdp}. The starting point
was set to $z_{0}=I_{20}/20,$ where $I_{20}$ is an identity matrix,
and the tolerance was set to $\varepsilon=10^{-6}(1+\|\nabla f(z_{0})\|_{2}).$ 

\renewcommand{\arraystretch}{0.8}\setlength\tabcolsep{4pt}
\begin{table}[tbh]
\begin{centering}
{\tiny{}}%
\begin{tabular}{c|cccc|cccc|cccc}
\multicolumn{1}{c|}{} & \multicolumn{4}{c|}{{\tiny{}\# of Function Evaluations}} & \multicolumn{4}{c|}{{\tiny{}\# of Gradient Evaluations}} & \multicolumn{4}{c}{{\tiny{}Runtime (seconds)}}\tabularnewline
\hline 
{\tiny{}$m,M$} & {\tiny{}UPF} & {\tiny{}ANCF} & {\tiny{}AIPP} & {\tiny{}APD} & {\tiny{}UPF} & {\tiny{}ANCF} & {\tiny{}AIPP} & {\tiny{}APD} & {\tiny{}UPF} & {\tiny{}ANCF} & {\tiny{}AIPP} & {\tiny{}APD}\tabularnewline
\hline 
{\tiny{}$10^{2},10^{4}$} & {\tiny{}6.5E4} & {\tiny{}2.1E4} & {\tiny{}7.1E4} & \textbf{\tiny{}1.1E3} & {\tiny{}1.3E4} & {\tiny{}1.6E4} & {\tiny{}6.7E4} & \textbf{\tiny{}2.1E3} & {\tiny{}9.2E1} & {\tiny{}2.7E1} & {\tiny{}1.1E2} & \textbf{\tiny{}3.2E0}\tabularnewline
{\tiny{}$10^{2},10^{5}$} & {\tiny{}1.9E5} & {\tiny{}4.4E4} & {\tiny{}4.1E5} & \textbf{\tiny{}3.3E3} & {\tiny{}3.8E4} & {\tiny{}3.3E4} & {\tiny{}3.9E5} & \textbf{\tiny{}6.7E3} & {\tiny{}2.6E2} & {\tiny{}5.8E1} & {\tiny{}6.5E2} & \textbf{\tiny{}9.9E0}\tabularnewline
{\tiny{}$10^{2},10^{6}$} & {\tiny{}3.0E5} & {\tiny{}5.9E4} & {\tiny{}7.6E5} & \textbf{\tiny{}7.1E3} & {\tiny{}6.1E4} & {\tiny{}4.4E4} & {\tiny{}7.0E5} & \textbf{\tiny{}1.4E4} & {\tiny{}4.3E2} & {\tiny{}7.9E1} & {\tiny{}1.2E3} & \textbf{\tiny{}2.1E1}\tabularnewline
{\tiny{}$10^{3},10^{7}$} & {\tiny{}3.0E5} & {\tiny{}5.9E4} & {\tiny{}7.6E5} & \textbf{\tiny{}1.0E4} & {\tiny{}6.1E4} & {\tiny{}4.4E4} & {\tiny{}6.9E5} & \textbf{\tiny{}2.0E4} & {\tiny{}4.3E2} & {\tiny{}8.1E1} & {\tiny{}1.2E3} & \textbf{\tiny{}3.0E1}\tabularnewline
{\tiny{}$10^{2},10^{7}$} & {\tiny{}3.3E5} & {\tiny{}6.6E4} & {\tiny{}2.6E5} & \textbf{\tiny{}1.2E4} & {\tiny{}6.5E4} & {\tiny{}5.0E4} & {\tiny{}1.3E5} & \textbf{\tiny{}2.4E4} & {\tiny{}4.5E2} & {\tiny{}8.6E1} & {\tiny{}2.5E2} & \textbf{\tiny{}3.4E1}\tabularnewline
{\tiny{}$10^{1},10^{7}$} & {\tiny{}5.8E5} & {\tiny{}1.4E5} & {\tiny{}8.8E4} & \textbf{\tiny{}2.0E4} & {\tiny{}1.2E5} & {\tiny{}1.1E5} & {\tiny{}4.4E4} & \textbf{\tiny{}4.1E4} & {\tiny{}7.9E2} & {\tiny{}1.9E2} & {\tiny{}8.3E1} & \textbf{\tiny{}5.8E1}\tabularnewline
\end{tabular}{\tiny{}\smallskip{}
}{\tiny\par}
\par\end{centering}
\caption{Unique function evaluations, unique gradient evaluations, and runtimes
in the QSDP experiments for different curvature pairs $(m,M)$. The
bolded numbers indicate the best algorithm in terms of the number
of evaluations (less is better) and runtime (less is better). Entries
marked with ``-'' are those that did not terminate within the prescribed
time limit.\label{tab:qsdp}}
\end{table}
\prettyref{tab:qsdp} reports the number of unique function evaluations,
unique gradient evaluations, and runtime (in seconds) for different
curvature pairs $(m,M)$, and \prettyref{fig:qsdp} plots the minimum
norm of the normalized stationarity residual $\|\bar{v}\|$ over iteration
count for each algorithm and curvature pairs $(m,M)=$ $(10^{2},10^{4})$,
$(10^{2},10^{5})$, and $(10^{2},10^{6})$.
\begin{figure}[tbh]
\begin{centering}
\includegraphics[viewport=0bp 0bp 1500bp 353bp,width=1\textwidth,trim=3.0cm 0cm 3.0cm 0cm,clip]{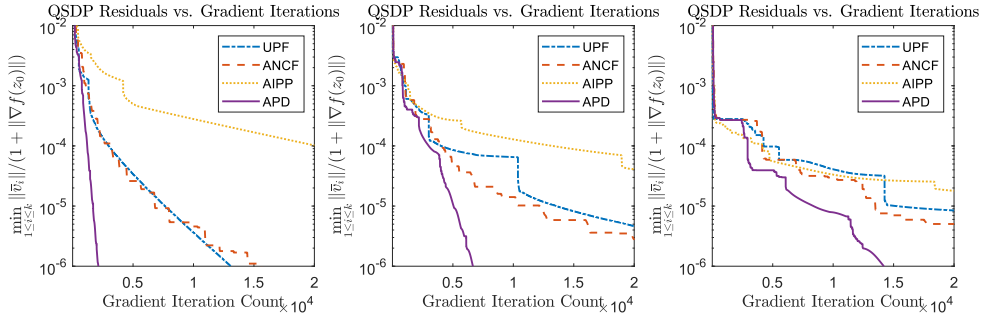}
\par\end{centering}
\caption{Plots of the minimum norm of the normalized stationarity residual
$\|\bar{v}\|$ over iteration count in the QSDP experiments. The curvature
pairs for the plots are $(10^{2},10^{4})$, $(10^{2},10^{5})$, and
$(10^{2},10^{6})$ from left-to-right.\label{fig:qsdp}}

\end{figure}

\subsection{Sparse Vector Recovery}

\label{subsec:svr}

The problem of interest is the penalized sparse vector recovery (SVR)
problem \cite{wen2018survey}

\begin{equation}
\min_{z\in\rn}\frac{1}{2}\|Az-b\|_{2}^{2}+\frac{\tau}{2}\|z\|_{2}^{2}+{\rm LPL}_{\gamma,\delta}(\|z\|_{2})\label{eq:prb_svr}
\end{equation}
where $\tau=10^{-2}$, $A\in\r^{\ell\times p}$ with $\ell\geq p$,
$b=A\tilde{u}$ where $u$ is a random vector whose entries are sampled
uniformly from {[}0,1{]}, for $(\gamma,\delta)=(10,10^{-1})$, the
function ${\rm LPL}_{\gamma,\delta}(z)=\gamma[1-\exp(-z/\delta)]$
is the concave Laplace penalty function \cite{trzasko2008highly}
at $z$. The goal of this problem is to find a sparse vector $\hat{z}$
such that $A\hat{z}$ is close to $b$. 

Each matrix $A$ is built from a recommender dataset where each entry
corresponds to a user-item rating. Specifically, the datasets were
taken from the well-known Jester, MovieLens 100K, and FilmTrust datasets
and the musical instruments and patio, lawn, and garden products Amazon
Review datasets published by the University of California San Diego.
The dimensions $(\ell,p)$ of each matrix generated by the previous
datasets were $(24938,100)$, $(9724,610)$, $(2071,1508)$, $(1429,900)$,
$(1686,962)$, respectively.

To put \eqref{eq:prb_svr} into the form of \eqref{eq:main_prb},
we use the decomposition given in \cite{yao2016efficient} where $h$
is a multiple of the 1-norm and $f$ is the function in \eqref{eq:prb_svr}
minus $h$. The starting point $z_{0}$ was set to be a vector whose
entries are all equal to $p$, and the tolerance was set to $\varepsilon=10^{-10}(1+\|\nabla f(z_{0})\|_{2}).$
Following the analysis in \cite{yao2016efficient}, AIPP uses the
curvature pair $(m,M)=(2\gamma/\delta^{2},\tau+\sigma_{\max}^{2}(A))$,
where $\sigma_{\max}(A)$ is the largest singular value of $A$. 

\begin{table}[tbh]
\begin{centering}
{\tiny{}}%
\begin{tabular}{c|cccc|cccc|cccc}
\multicolumn{1}{c|}{} & \multicolumn{4}{c|}{{\tiny{}\# of Function Evaluations}} & \multicolumn{4}{c|}{{\tiny{}\# of Gradient Evaluations}} & \multicolumn{4}{c}{{\tiny{}Runtime (seconds)}}\tabularnewline
\hline 
{\tiny{}$\ell,p$} & {\tiny{}UPF} & {\tiny{}ANCF} & {\tiny{}AIPP} & {\tiny{}APD} & {\tiny{}UPF} & {\tiny{}ANCF} & {\tiny{}AIPP} & {\tiny{}APD} & {\tiny{}UPF} & {\tiny{}ANCF} & {\tiny{}AIPP} & {\tiny{}APD}\tabularnewline
\hline 
{\tiny{}1429, 900} & {\tiny{}6.9E3} & {\tiny{}3.5E3} & {\tiny{}1.5E4} & \textbf{\tiny{}3.7E2} & {\tiny{}8.0E2} & {\tiny{}2.6E3} & {\tiny{}1.1E4} & \textbf{\tiny{}7.3E2} & {\tiny{}2.7E0} & {\tiny{}1.2E0} & {\tiny{}1.1E1} & \textbf{\tiny{}3.4E-1}\tabularnewline
{\tiny{}1686, 962} & {\tiny{}2.9E4} & {\tiny{}1.1E4} & {\tiny{}7.7E4} & \textbf{\tiny{}2.6E3} & {\tiny{}4.9E3} & {\tiny{}8.2E3} & {\tiny{}5.8E4} & \textbf{\tiny{}3.8E3} & {\tiny{}1.3E1} & {\tiny{}4.1E0} & {\tiny{}6.0E1} & \textbf{\tiny{}2.4E0}\tabularnewline
{\tiny{}9724, 610} & {\tiny{}3.9E4} & {\tiny{}4.3E4} & {\tiny{}6.2E4} & \textbf{\tiny{}3.2E3} & {\tiny{}6.3E3} & {\tiny{}3.2E4} & {\tiny{}3.3E4} & \textbf{\tiny{}6.2E3} & {\tiny{}3.6E1} & {\tiny{}3.5E1} & {\tiny{}8.4E1} & \textbf{\tiny{}6.0E0}\tabularnewline
{\tiny{}24938, 100} & {\tiny{}5.7E5} & {\tiny{}2.4E5} & {\tiny{}9.8E5} & \textbf{\tiny{}2.5E4} & {\tiny{}1.1E5} & {\tiny{}1.8E5} & {\tiny{}5.0E5} & \textbf{\tiny{}4.8E4} & {\tiny{}1.7E2} & {\tiny{}5.0E1} & {\tiny{}4.3E2} & \textbf{\tiny{}1.6E1}\tabularnewline
{\tiny{}2071, 1508} & {\tiny{}-} & {\tiny{}2.9E5} & {\tiny{}-} & \textbf{\tiny{}2.8E4} & {\tiny{}-} & {\tiny{}2.2E5} & {\tiny{}-} & \textbf{\tiny{}5.5E4} & {\tiny{}-} & {\tiny{}1.3E3} & {\tiny{}-} & \textbf{\tiny{}2.6E2}\tabularnewline
\end{tabular}{\tiny{}\smallskip{}
}{\tiny\par}
\par\end{centering}
\caption{Unique function evaluations, unique gradient evaluations, and runtimes
in the SVR experiments for different datasets and their dimensions
$(\ell,p)$. The bolded numbers indicate the best algorithm in terms
of the number of evaluations (less is better) and runtime (less is
better). Entries marked with ``-'' are those that did not terminate
within the prescribed time or iteration limit.\label{tab:svr}}
\end{table}
\prettyref{tab:svr} reports the unique function evaluations, unique
gradient evaluations, and runtime (in seconds) for the different datasets
mentioned above, and \prettyref{fig:svr} plots the minimum norm of
the normalized stationarity residual $\|\bar{v}\|$ over the gradient
count for each algorithm and the first, second, and fourth row of
\prettyref{tab:svr}.
\begin{figure}[tbh]
\begin{centering}
\includegraphics[width=1\textwidth,trim=3.0cm 0cm 3.0cm 0cm,clip]{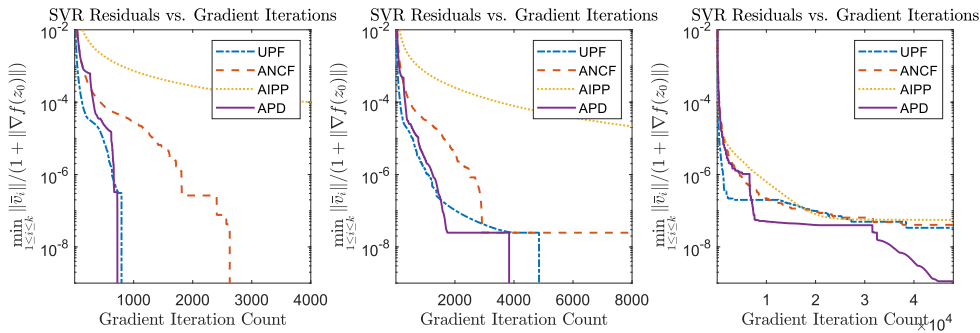}
\par\end{centering}
\caption{Plots of the minimum norm of the normalized stationarity residual
$\|\bar{v}\|$ over iteration count in the SVR experiments. The dimensions
and upper curvature $(\ell,p)$ for the plots are $(1429,900)$, $(1686,962)$,
and $(24938,100)$ from left-to-right.\label{fig:svr}}
\end{figure}

\subsection{Low-Rank Matrix Completion}

\label{subsec:lrmc}

The problem of interest is the penalized nonconvex low-rank matrix
completion (LRMC) problem \cite{wen2018survey,yao2016efficient}

\begin{equation}
\min_{Z\in\r^{\ell\times p}}\frac{1}{2}\|\Pi_{\Omega}(Z)-\Pi_{\Omega}(X)\|_{F}^{2}+\frac{\tau}{2}\|Z\|_{F}^{2}+({\rm MCP}_{\gamma,\delta}\circ\sigma)(Z),\label{eq:prb_mcp}
\end{equation}
where $\tau=10^{-7}$, $X\in\r^{\ell\times p}$ is a reference image,
$\sigma:\r^{\ell\times p}\mapsto\r^{\min\{\ell,p\}}$ maps a matrix
to its vector of singular values, for $(\gamma,\delta)=(450,10^{-4})$
the function ${\rm MCP}_{\gamma,\delta}(z)$ is the minimax concave
penalty (MCP) function \cite{zhang2010nearly} at $z$ (which takes
value $\gamma z-z^{2}/(2\delta)$ if $z\leq\gamma\delta$ and $\gamma^{2}\delta/2$
otherwise), and, for a given corrupted image $\Omega$, the function
$\Pi_{\Omega}:\r^{\ell\times p}\mapsto\r^{\ell\times p}$ is the projection
operator that zeros out entries of its input where the corresponding
entry in $\Omega$ is zero. The goal of this problem is to fill in
the zero entries of a corrupted image $\Omega$ of $X$ so that the
resulting image $\hat{Z}$ is close to $X$. 

To put \eqref{eq:prb_mcp} into the form of \eqref{eq:main_prb},
we use the decomposition given in \cite{yao2016efficient} where $h$
is a multiple of the nuclear norm and $f$ is the function in \eqref{eq:prb_mcp}
minus $h$. Experiments were run on different reference images $X$
given in the first row of \prettyref{fig:img_results} and $\Omega$
was set to be a corrupted version of $X$ where we add Gaussian noise
with a 100 dB signal-to-noise ratio and remove 30\% of the resulting
pixels. For illustration, two corrupted images can be found in the
first columns of the last two rows in \prettyref{fig:img_results}.
The starting point $Z_{0}$ was set to be a matrix whose entries were
equal to the average of the grayscale value of $\Omega$, and the
tolerance was set to $\varepsilon=10^{-10}(1+\|\nabla f(Z_{0})\|_{F}).$
Following the analysis in \cite{yao2016efficient}, AIPP uses the
curvature pair $(m,M)=(2/\delta,1+\tau)$.

\begin{figure}[tbh]
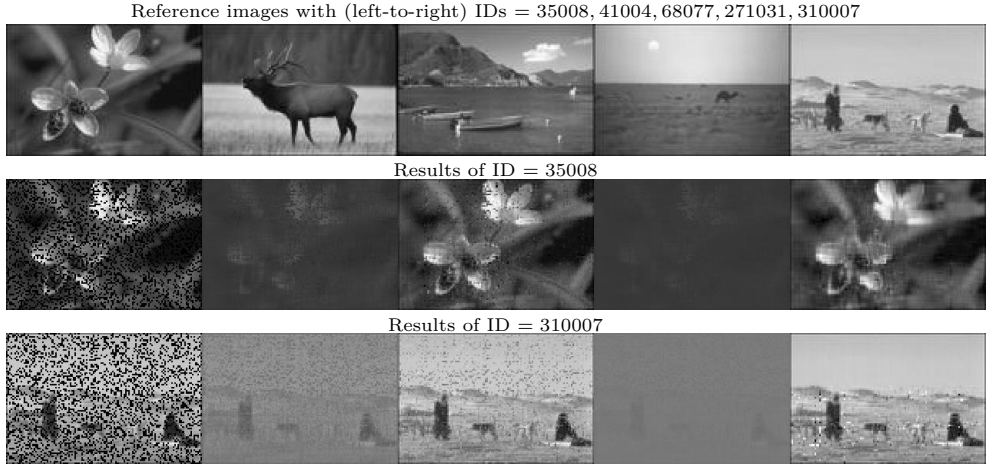

\begin{centering}
{\scriptsize{}Reference images with (left-to-right) IDs $=35008,41004,68077,271031,310007$}{\scriptsize\par}
\par\end{centering}
\begin{centering}
\includegraphics[width=1\textwidth]{experiments/mat_compl_images}
\par\end{centering}
\begin{centering}
{\scriptsize{}Results of ID $=35008$}{\scriptsize\par}
\par\end{centering}
\begin{centering}
\includegraphics[width=1\textwidth]{experiments/mat_compl_35008}
\par\end{centering}
\begin{centering}
{\scriptsize{}Results of ID $=310007$}{\scriptsize\par}
\par\end{centering}
\begin{centering}
\includegraphics[width=1\textwidth]{experiments/mat_compl_310007}{\tiny{}\smallskip{}
}{\tiny\par}
\par\end{centering}
\caption{The first row presents the downscaled (80$\times$120) reference images
$X$ taken from the Berkeley Segmentation Dataset, along with their
image IDs (in order). The second and third rows present the results
of the LRMC experiments for two of the images. Specifically, each
of these rows presents (from left to right) the corrupted image $\Omega$
and the images generated by UPF, ANCF, AIPP, and APD, respectively.\label{fig:img_results}}

\end{figure}
\prettyref{tab:lrmc} presents the relative error\footnote{For a candidate image $\hat{Z}$, this quantity is defined as $\|\hat{Z}-X\|_{F}$
divided by $\max_{Z\in\Xi}\|Z-X\|_{F}$ where $\Xi$ is the set of
all grayscale images. Its value can range from 0.0 (full recovery)
to 1.0.} of the final candidate image and runtime (in seconds) for the different
reference images, and the last two rows in \prettyref{fig:img_results}
show the candidate images generated by each method for two of the
reference images. 
\begin{table}[tbh]
\begin{centering}
{\tiny{}}%
\begin{tabular}{c|cccc|cccc}
 & \multicolumn{4}{c|}{{\tiny{}Relative Error}} & \multicolumn{4}{c}{{\tiny{}Runtime (seconds)}}\tabularnewline
\hline 
{\tiny{}image id} & {\tiny{}UPF} & {\tiny{}ANCF} & {\tiny{}AIPP} & {\tiny{}APD} & {\tiny{}UPF} & {\tiny{}ANCF} & {\tiny{}AIPP} & {\tiny{}APD}\tabularnewline
\hline 
{\tiny{}35008} & {\tiny{}0.220} & {\tiny{}0.059} & {\tiny{}0.241} & \textbf{\tiny{}0.034} & {\tiny{}104.7} & {\tiny{}174.5} & {\tiny{}89.2} & \textbf{\tiny{}44.4}\tabularnewline
{\tiny{}41004} & {\tiny{}0.259} & {\tiny{}0.103} & {\tiny{}0.312} & \textbf{\tiny{}0.072} & {\tiny{}114.6} & {\tiny{}175.9} & {\tiny{}90.6} & \textbf{\tiny{}45.9}\tabularnewline
{\tiny{}68077} & {\tiny{}0.238} & {\tiny{}0.075} & {\tiny{}0.276} & \textbf{\tiny{}0.046} & {\tiny{}107.6} & {\tiny{}175.6} & {\tiny{}89.2} & \textbf{\tiny{}43.0}\tabularnewline
{\tiny{}271031} & {\tiny{}0.272} & {\tiny{}0.146} & {\tiny{}0.363} & \textbf{\tiny{}0.079} & {\tiny{}117.5} & {\tiny{}176.8} & {\tiny{}95.7} & \textbf{\tiny{}48.0}\tabularnewline
{\tiny{}310007} & {\tiny{}0.265} & {\tiny{}0.079} & {\tiny{}0.324} & \textbf{\tiny{}0.048} & {\tiny{}116.0} & {\tiny{}186.4} & {\tiny{}92.7} & \textbf{\tiny{}44.9}\tabularnewline
\end{tabular}{\tiny{}\smallskip{}
}{\tiny\par}
\par\end{centering}
\caption{Relative errors and runtimes in the LRMC experiments for different
reference images in the LRMC experiments. The bolded numbers indicate
the best algorithm in terms of the relative error (less is better)
and runtime in seconds (less is better).\label{tab:lrmc}}
\end{table}

\subsection{Comments about the numerical results}

\label{subsec:numer_comments}

In \prettyref{subsec:qsdp}, APD substantially outperformed\footnote{5-20x (resp. 2-7x) fewer function (resp. gradient) evaluations for
ANCF and 27-60x (resp. 2-6x) fewer for UPF.} its competitors and its non-adaptive variant AIPP under the given
numerical tolerance $\varepsilon$. However, \prettyref{fig:qsdp}
showed that ANCF was more comparable to PF.APD when the curvature
ratio $M/m$ was large or a larger (more lenient) tolerance was given.
In \prettyref{subsec:svr}, APD consistently outperformed its competitors
on all metrics. For the number of gradient evaluations, UPF performed
similarly to APD but was among the worst adaptive methods for function
evaluations. In \prettyref{subsec:lrmc}, APD generated higher-quality
candidate images compared to its competitors under a fixed iteration
budget. Specifically, it was shown in \prettyref{fig:img_results}
that PF.APD generated images with fewer artifacts, more consistent
lighting, and in a more timely manner. 

\section{Concluding Remarks}

\label{sec:concluding}

This paper establishes iteration complexity bounds for PF.APD that
are only optimal, up to logarithmic terms, in terms of $(M,\Delta_{0},\varepsilon)$
when $f$ is convex and in terms of $(m,M,\Delta_{0},\varepsilon)$
when $f$ is weakly-convex. Consequently, it remains to be seen whether
an optimal complexity bound in terms of $d_{0}$ exists for a parameter-free
and convexity-unaware method. 

To alleviate the issues regarding the $d_{0}$-suboptimal complexity
of APD (specifically, when $f$ is convex and $d_{0}$ is unknown)
one could consider running running $S+1$ instances of PF.APD (either
in lockstep or in parallel) with different initial estimates $m_{0}=1,\varepsilon,\varepsilon/2,\ldots,\varepsilon/2^{S-1}$;
in particular, the whole scheme stops when one of these instances
stops successfully. The number of resolvent evaluations of this approach
is at most $S+1$ times the minimum of the bound in \eqref{eq:apd_spec_compl}
over the different values of $m_{0}$. Consequently, following the
remarks at the end of \prettyref{sec:pf_algs}, if $d_{0}\leq2^{S-1}$
then one of the $S+1$ instances obtains the lower bound in \prettyref{tab:compl_compare}
for the convex case; otherwise, the bound for APD in \prettyref{tab:compl_compare}
is obtained. Moreover, if $S$ is chosen small compared to the other
terms in \eqref{eq:apd_spec_compl} and $d_{0}\leq2^{S-1}$, then
the cost is on the same order of magnitude as the $(M,\Delta_{0},d_{0},\varepsilon)$-complexity
optimal method described at the end of \prettyref{sec:pf_algs} (which
requires knowledge of $d_{0}$).

In addition to the applications in \prettyref{sec:extensions}, it
would be interesting to see if PF.APD could be leveraged to develop
a parameter-free proximal augmented Lagrangian method, following schemes
similar to ones as in \cite{melo2020iteration,kong2020iteration}.

\bibliographystyle{siamplain}
\bibliography{apd_refs}

\end{document}